\numberwithin{equation}{section}
\newtheorem{theorem}{Theorem}[section]
\newtheorem{lemma}[theorem]{Lemma}
\newtheorem{proposition}[theorem]{Proposition}
\newtheorem{corollary}[theorem]{Corollary}
\theoremstyle{definition}
\newtheorem{definition}[theorem]{Definition}
\newtheorem{def-prop}[theorem]{Definition-Proposition}
\newtheorem{remark}[theorem]{Remark}
\newtheorem{example}[theorem]{Example}
\newtheorem{notation}[theorem]{Notation}
\DeclareMathOperator{\reg}{reg}
\DeclareMathOperator{\depth}{depth}
\DeclareMathOperator{\pd}{pd}
\newcommand{\PP}{{\mathbb P}}
\newcommand{\ZZ}{{\mathbb Z}}
\newcommand{\NN}{{\mathbb N}}
\def\A{{\mathcal A}}
\def\mm{{\mathfrak m}}
\def\a{{\bf a}}
\def\e{{\bf e}}
\def\s{{\bf s}}
\def\y{{\bf y}}
\def\tH{{\tilde{H}}}
\def\1{{\bf 1}}
\def\0{{\bf 0}}
\begin{document}

\title{Algebraic properties of toric rings of graphs}

\author{Huy T\`ai H\`a}
\address{Tulane University \\ Department of Mathematics \\
6823 St. Charles Ave. \\ New Orleans, LA 70118, USA}
\email{tha@tulane.edu}
\urladdr{http://www.math.tulane.edu/$\sim$tai/}

\author{Selvi Kara}
\address{University of South Alabama\\ Department of Mathematics and Statistics\\
411 University Blvd. North \\ Mobile, AL 36688, USA}
\email{selvi@southalabama.edu}
\urladdr{}

\author{Augustine O'Keefe}
\address{Connecticut College \\ Mathematics Department\\
 270 Mohegan Avenue Pkwy. \\ New London, CT 06320, USA}
\email{aokeefe@conncoll.edu}
\urladdr{}

\keywords{toric rings, graphs, Cohen-Macaulay, regularity, projective dimension, odd cycle condition}
\subjclass[2010]{}

\begin{abstract}
Let $G = (V,E)$ be a simple graph. We investigate the Cohen-Macaulayness and algebraic invariants, such as the Castelnuovo-Mumford regularity and the projective dimension, of the toric ring $k[G]$ via those of toric rings associated to induced subgraphs of $G$.
\end{abstract}

\maketitle


\section{Introduction}

Let $G = (V,E)$ be a simple graph over the vertex set $V$ and with edge set $E \subseteq 2^V$. Let $k$ be an arbitrary field, and identify the vertices and edges of $G$ with the variables in polynomial rings $k[V]$ and $k[E]$, respectively. The \emph{toric ring} associated to $G$, denoted by $k[G]$, is defined to be the image of the following monomial $k$-algebra homomorphism:
$$\phi_G: k[E] \longrightarrow k[V] \quad \text{given by} \quad e = \{x,y\} \in E \mapsto xy \in k[V].$$

Toric rings in general (defined to be the image of monomial maps between polynomial rings; see \cite{CS}) are the object of study in various areas in mathematics. Toric rings associated to graphs have attracted significant attention in combinatorial commutative algebra. For instance, their algebraic properties and invariants have been investigated in \cite{AH, BM, BOVT, CN, GV, HHKO, HMO, OH, OHH, Vill}. Their \emph{toric ideals} (the kernel of $\phi_G$) have also been studied in \cite{DA, DG, RTT, TT}.

In this paper, we examine the Cohen-Macaulayness and algebraic invariants of toric rings associated to graphs. Our approach is to see how properties and invariants of $k[G]$ could be derived from or bounded by similar properties and invariants of $k[H]$ for a subgraph $H$ of $G$. Our work hinges on the following observation: if $H$ is an \emph{induced} subgraph of $G$ then the homology groups of $k[H]$ are contained in the homology groups of $k[G]$ (see Lemma \ref{lem.retract}). Particularly, it follows that important algebraic invariants of $k[G]$ are bounded below by that of $k[H]$ and, under certain conditions, the non-Cohen-Macaulayness of $k[H]$ implies that of $k[G]$ (see Theorem \ref{thm.reduction}).

This approach allows us to quickly recover a main result of recent work of Biermann, O'Keefe and Van Tuyl (\cite[Theorem 2.6]{BOVT} and, subsequently, \cite[Theorem 1.1]{BOVT}) for the regularity of $k[G]$. We also obtain similar statements for the projective dimension of $k[G]$. Specifically, we prove the following theorems.

\noindent{\bf Theorem \ref{thm.BOVT}.} Let $G$ be a simple graph. Suppose that $G$ contains an induced subgraph which is the disjoint union of graphs $H_1, \dots, H_s$. Then
\begin{enumerate}
\item $\reg k[G] \ge \sum_{i=1}^s \reg k[H_i].$
\item $\pd k[G] \ge \sum_{i=1}^s \pd k[H_i].$
\end{enumerate}

\noindent{\bf Theorem \ref{thm.BOVT1}.} Let $G$ be a simple graph. Suppose that $G$ contains an induced subgraph which is the disjoint union of complete bipartite graphs $K_{m_1,n_1}, \dots, K_{m_s,n_s}$. Then
\begin{enumerate}
\item $\reg k[G] \ge \sum_{i=1}^s \min\{m_i,n_i\} - s.$
\item $\pd k[G] \ge \sum_{i=1}^s (m_i-1)(n_i-1).$
\end{enumerate}

To prove Theorems \ref{thm.BOVT} and \ref{thm.BOVT1}, we make use of our initial observation that the regularity and projective dimension of $k[G]$ are bounded below by that of $k[H]$ for an induced subgraph $H$ of $G$ (Theorem \ref{thm.reduction}), and show that the regularity and the projective dimension of toric rings are additive with respect to disjoint union of graphs (Lemma \ref{lem.disjoint}).

Our method further leads us to the problem of finding ``forbidden'' structures in $G$ which prevent $k[G]$ from being Cohen-Macaulay. We give such a forbidden structure in the following theorem.

\noindent {\bf Theorem \ref{thm.nCM}.} Let $G = (V,E)$ be a simple graph. Suppose that $|E| \le |V|+2$ and $G$ contains an induced subgraph which consists of:
\begin{itemize}
\item two vertex-disjoint odd cycles; and
\item two vertex-disjoint (except possibly at their endpoint vertices) paths of length $\ge 2$ connecting these cycles.
\end{itemize}
Then the toric ring $k[G]$ is not Cohen-Macaulay.

To prove Theorem \ref{thm.nCM}, we also make use of our initial observation, Theorem \ref{thm.reduction}, to reduce to the case where $G$ consists of exactly two odd cycles that are connected by exactly two paths of length $\ge 2$ which do not have any vertex in common (except possibly at the endpoints). In this case, we then apply a well-known formula (cf. Theorem \ref{thm.homology}) which relates the graded Betti numbers of $k[G]$ to the reduced homology of certain simplicial complexes and, by a suitable choice of $\s \in \ZZ^V$, show that an appropriate graded Betti number of $k[G]$ is not zero.

It follows from the main theorem of Ohsugi and Hibi \cite{OH} that if every pair of induced odd cycles in $G$ either share a vertex or are connected by an edge then $k[G]$ is normal. By a celebrated result of Hochster \cite{H}, $k[G]$ is Cohen-Macaulay in this case. On the other hand, if $G$ consists of exactly two odd cycles which are connected by only one path (of length $\ge 2$) then $k[G]$ is not normal but Cohen-Macaulay. Thus, the structure of having two induced odd cycles connected by (at least) two paths of length $\ge 2$ is, in some sense, the minimal structure that one could look for to break the Cohen-Macaulayness of $k[G]$. 


The paper is structured as follows. In the next section, we establish notation and terminology used in the paper. In Section \ref{sec.reg}, we give our initial observation that properties and invariants of $k[G]$ could be derived from or bounded by those of $k[H]$ for an induced subgraph $H$ of $G$. We will also prove our first two main results, Theorems \ref{thm.BOVT} and \ref{thm.BOVT1}, in this section. In Section \ref{sec.Delta}, we focus on a forbidden structure and, for a suitable choice of $\s \in \ZZ^V$, completely describe the associated simplicial complex $\Delta_\s$. This description of $\Delta_\s$ is key in the proof of the sufficient condition for the non-Cohen-Macaulayness of $k[G]$ presented in the last section. In Section \ref{sec.nCM}, we prove our last main result, Theorems \ref{thm.nCM}, and give a number of examples.

\noindent{\bf Acknowledgement.} The first named author is partially supported by the Simons Foundation (grant \#279786). Part of this work was done when the first two authors were visiting Vietnam Institute for Advanced Study in Mathematics (VIASM). The authors would like to thank VIASM for its support and hospitality. The authors would also like to thank K. Kimura for providing Example \ref{ex.Kimura} and for spotting a gap in our first draft.


\section{Preliminaries} \label{sec.prel}

We assume that the reader is familiar with basic concepts and terminology in combinatorial commutative algebra. For unexplained notations, we refer the reader to standard texts in the area \cite{BS,D,E}.

Throughout the paper, $G = (V,E)$ shall denote a \emph{simple} graph over the vertex set $V$ and edge set $E \subseteq 2^V$ (where $|V| = n$ and $|E| = m$). A simple graph is a graph without loops nor multiple edges. When the vertex set and edge set of $G$ are not specified, we shall use $V_G$ and $E_G$, respectively, to denote these sets.

The following familiar structures in graphs will be used in our discussion.

\begin{definition} Let $G$ be a simple graph.
\begin{enumerate}
\item A graph $H$ is a \emph{subgraph} of $G$ if $V_H \subseteq V_G$ and $E_H \subseteq E_G$.
\item A subgraph $H$ of $G$ is an \emph{induced} subgraph if for any $x,y \in V_H$, $e = \{x,y\} \in E_H$ if and only if $e \in E_G$.
\item A \emph{path} in $G$ is an alternating sequence of distinct vertices and edges (except possibly the endpoints) $x_1, e_1, x_2, e_2, \dots, e_l, x_{l+1}$, where $e_i = \{x_i, x_{i+1}\} \in E_G$.
\item A \emph{cycle} is a path whose endpoint vertices coincide. An \emph{odd} cycle is a cycle with odd number of edges.
\end{enumerate}
\end{definition}

Let $k$ be a field, and let $k[V]$ and $k[E]$ be polynomial rings, where by abusing notation we identify the vertices and edges of $G$ with indeterminates. Let $k[G]$ denote the image of the $k$-algebra homomorphism:
$$\phi_G: k[E] \longrightarrow k[V] \quad \text{given by} \quad e = \{x,y\} \in E \mapsto xy \in k[V].$$

\begin{definition} Let $G = (V,E)$ be a simple graph on $n$ vertices and $m$ edges. Let $M_G$ be the $n \times m$ incidence matrix of $G$. For $e \in E$, we shall denote by $\a_e$ the column of $M_G$ corresponding to $e$. Let $\A = \{\a_e ~|~ e \in E\}$ and let $S_G = \NN\A \subseteq \ZZ^n$ be the semigroup spanned by $\A$.
\end{definition}

\begin{remark} Suppose that $V = \{x_1, \dots, x_n\}$. The polynomial ring $k[V]$ has a natural $\NN^n$-graded structure in which $\deg(x_i) = \e_i$, where $\{\e_1, \dots, \e_n\}$ represents the standard unit basis for $\ZZ^n$. The monomial map $\phi_G$ induces an $\NN^n$-graded structure for $k[E]$. Moreover, since $\deg(xy) = \a_e$, when $e = \{x,y\} \in E$, $\phi_G$ in fact induces a natural $S_G$-graded structure for $k[E]$ and $k[G]$.
\end{remark}

For $\s \in \ZZ^n$, we shall denote by $\beta_{i,\s}^{k[E]}(k[G])$ (or simply $\beta_{i,\s}(k[G])$) the number of generators of multidegree $\s$ of the $i$th syzygy module of $k[G]$ (i.e., the $(i,\s)$-graded Betti number of $k[G]$.) Then $\beta_{i,\s}(k[G]) = 0$ if $\s \not\in S_G$.

The center of our work is to investigate the vanishing and non-vanishing of graded Betti numbers of $k[G]$. We shall recall a construction from \cite{AH}, which facilitates a mean to relate graded Betti numbers of $k[G]$ to reduced homology groups of certain simplicial complexes, the \emph{degree complexes}.

\begin{definition} Let $G = (V,E)$ be a simple graph, and let $\s \in \ZZ^V$. Define $\Delta_\s$ to be the simplicial complex on $E$ whose facets are (maximal sets) of the form
$$\{e \in E ~\big|~ c_e > 0\}, \text{ where } \s = \sum_{e \in E} c_e \a_e \text{ and } c_e \in \ZZ_{\ge 0} \ \forall \ e \in E.$$
\end{definition}
\noindent(Clearly, $\Delta_\s = \emptyset$ if $\s \not\in S_G$.)

\begin{theorem}[\protect{\cite[Lemma 4.1]{AH}}] \label{thm.homology}
$\beta_{i,\s}(k[G]) = \dim_k \tilde{H}_{i-1}(\Delta_\s;k).$
\end{theorem}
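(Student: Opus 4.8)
The plan is to compute the graded Betti numbers as multigraded $\Tor$ modules and then, in each fixed multidegree $\s$, to identify the relevant strand of the Koszul complex with the reduced chain complex of the degree complex $\Delta_\s$. First I would record that, by definition of graded Betti numbers over the polynomial ring $R := k[E]$,
$$\beta_{i,\s}(k[G]) = \dim_k \Tor_i^{R}(k[G], k)_\s,$$
where $k = R/\mm$ and $\mm$ is the irrelevant ideal generated by the edge variables. To compute these $\Tor$ modules I would resolve $k$ by the Koszul complex $K_\bullet$ on the variables $\{e : e \in E\}$, so that $\Tor_i^R(k[G],k) = H_i(k[G] \otimes_R K_\bullet)$. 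In homological degree $i$ the module $K_i = \bigwedge^i R^m$ has an $R$-basis $\{e_F : F \subseteq E, |F| = i\}$, where the basis vector $e_F$ carries internal $S_G$-degree $\sum_{e \in F} \a_e$. Consequently, after tensoring with $k[G]$ and extracting the multidegree-$\s$ component, the summand $k[G] \cdot e_F$ contributes $(k[G])_{\s - \sum_{e \in F} \a_e}$, which is one-dimensional over $k$ when $\s - \sum_{e \in F} \a_e \in S_G$ and is zero otherwise, since $k[G]$ is the semigroup ring on $S_G$ and hence has at most one-dimensional $S_G$-graded pieces.

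Next I would translate this nonvanishing condition into membership in $\Delta_\s$. The condition $\s - \sum_{e \in F} \a_e \in S_G$ says precisely that $\s = \sum_{e \in E} c_e \a_e$ for some $c_e \in \ZZ_{\ge 0}$ with $c_e > 0$ for every $e \in F$; by the definition of $\Delta_\s$, this is exactly the assertion that $F$ is a face of $\Delta_\s$. Thus the multidegree-$\s$ strand $(k[G] \otimes_R K_\bullet)_\s$ has, in homological degree $i$, a $k$-basis indexed by the $i$-element faces $F$ of $\Delta_\s$, that is, by the $(i-1)$-dimensional faces of $\Delta_\s$. This identifies the underlying graded vector space of the strand with the reduced simplicial chain complex of $\Delta_\s$, shifted by one in homological degree. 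In particular, the term $F = \emptyset$ in homological degree $0$ contributes $(k[G])_\s$ and corresponds to the augmentation term $\tilde{C}_{-1}(\Delta_\s;k) = k$, which is exactly why reduced rather than ordinary homology appears.

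Finally I would verify that the Koszul differential, restricted to multidegree $\s$, agrees up to sign with the simplicial boundary map of $\Delta_\s$. Fixing a total order on $E$, the Koszul differential sends $e_F$ to the signed alternating sum $\sum_{e \in F} \pm\, e_{F \setminus \{e\}}$, and the only subtlety is that the coefficients a priori lie in $k[G]$ rather than in $k$; but in a fixed multidegree each surviving term is identified with the generator of a one-dimensional space, and the resulting signs are precisely those of the simplicial boundary operator. Granting this compatibility, the strand is the reduced chain complex of $\Delta_\s$ up to a shift, and passing to homology yields
$$\Tor_i^R(k[G], k)_\s \cong \tilde{H}_{i-1}(\Delta_\s; k),$$
which is the claimed formula after taking $k$-dimensions. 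The one genuine obstacle is the sign bookkeeping in this last step: one must confirm that passing to the multidegree-$\s$ component, where distinct basis vectors $e_F$ are identified with generators of distinct copies of $k$, does not disturb the alternating signs, so that the Koszul and simplicial differentials coincide and not merely share the same support. This verification is routine but requires care, and it is the heart of the identification with reduced homology.
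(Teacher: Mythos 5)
Your proposal is correct. The paper offers no proof of this statement --- it is quoted directly from \cite[Lemma 4.1]{AH} --- and your argument, computing $\Tor_i^{k[E]}(k[G],k)_\s$ via the Koszul resolution of $k$ and identifying the multidegree-$\s$ strand with the shifted reduced chain complex of the squarefree divisor complex $\Delta_\s$ (the sign issue you flag is harmless because each nonzero $S_G$-graded piece of $k[G]$ has a canonical monomial generator, so the identifications are canonical and the Koszul differential literally becomes the simplicial boundary), is precisely the standard proof of that cited result.
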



\section{Regularity and projective dimension} \label{sec.reg}

The aim of this section is to provide bounds for the regularity and projective dimension of the toric ring $k[G]$ in terms of the toric rings associated to induced subgraphs of $G$. Particularly, we shall recover a main result of Biermann, O'Keefe and Van Tuyl \cite{BOVT} on the regularity of $k[G]$ and prove similar statements for the projective dimension of $k[G]$.

We shall start by investigating how homology groups of $k[H]$, for an induced subgraph $H$ of $G$, are compared to those of $k[G]$. This is also a keystone of our study on the Cohen-Macaulayness of $k[G]$ later on in the paper. To achieve this, we will make use of the notion of retract algebras, which we shall now recall.

\begin{definition} Let $k$ be a field and let $S = \bigoplus_{n \ge 0}S_n$ be a graded ring. We call $S$ a \emph{standard} graded $k$-algebra if $S_0 = k$ and $S$ is generated by $S_1$ as a $k$-algebra. In this case, let $\mm_S = \bigoplus_{n \ge 1} S_n$ be its maximal homogeneous ideal.
\end{definition}

\begin{definition} \label{def.retract}
Let $S$ be a $k$-algebra and let $T$ be a $k$-subalgebra of $S$. The natural inclusion $\iota: T \hookrightarrow S$ is said to be an \emph{algebra retract} if there exists a surjective $k$-algebra homomorphism (a \emph{retraction map}) $\epsilon: S \twoheadrightarrow T$ such that $\epsilon \circ \iota = \text{id}_T$.
\end{definition}

\begin{notation} For a standard graded $k$-algebra $S$, we shall write $H_\bullet(S)$ for the Koszul homology of $S$ with respect to a system of minimal generators of $\mm_S$.
\end{notation}

The following observation is key for our work. This observation allows us to relate homological properties and algebraic invariants of $k[G]$ to that of toric rings associated to induced subgraphs of $G$.

\begin{lemma} \label{lem.retract}
Let $T \subseteq S$ be an algebra retract of standard graded $k$-algebras. Then the inclusion $T \hookrightarrow S$ induces the following algebra retracts
$$H_\bullet(T) \subseteq H_\bullet(S).$$
\end{lemma}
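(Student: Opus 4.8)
The plan is to exhibit explicit maps on Koszul homology induced by the inclusion $\iota$ and the retraction $\epsilon$, and then to check that their composite is the identity. The essential structural point is that the Koszul complex of a standard graded algebra is a differential graded (DG) algebra, so its homology carries a natural structure of graded-commutative $k$-algebra and every DG algebra map induces a $k$-algebra map on homology. Thus it suffices to lift both $\iota$ and $\epsilon$ to DG algebra maps between the relevant Koszul complexes; functoriality together with the identity $\epsilon \circ \iota = \text{id}_T$ will then yield the conclusion.

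First I would arrange the minimal generating sets compatibly. Since $\iota$ and $\epsilon$ are graded and $\epsilon \circ \iota = \text{id}_T$, restricting to degree one gives $\epsilon_1 \circ \iota_1 = \text{id}_{T_1}$, so $\iota_1$ is split injective and $S_1 = \iota_1(T_1) \oplus \ker(\epsilon_1)$. Because $T$ and $S$ are standard graded, a minimal generating set of $\mm_T$ (resp.\ $\mm_S$) is precisely a $k$-basis of $T_1$ (resp.\ $S_1$). I would therefore fix a basis $y_1, \dots, y_p$ of $T_1$ and extend $\iota(y_1), \dots, \iota(y_p)$ to a basis $\iota(y_1), \dots, \iota(y_p), z_1, \dots, z_q$ of $S_1$, where $z_1, \dots, z_q$ is a basis of $\ker(\epsilon_1)$. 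Let $K^T_\bullet$ and $K^S_\bullet$ denote the Koszul complexes computing $H_\bullet(T)$ and $H_\bullet(S)$ with respect to these generators.

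Next I would write down the two DG maps. The inclusion $\iota$ induces a map $K^T_\bullet \to K^S_\bullet$ acting by $\iota$ on coefficients and by the inclusion of exterior algebras $e_{y_i} \mapsto e_{y_i}$; compatibility with the Koszul differentials is immediate since $e_{y_i}$ maps to $\iota(y_i)$ on both routes. The retraction $\epsilon$ induces a map $K^S_\bullet \to K^T_\bullet$ acting by $\epsilon$ on coefficients and on exterior generators by $e_{y_i} \mapsto e_{y_i}$ and $e_{z_j} \mapsto 0$. Verifying that this second map is a chain map is the only place where the retract hypothesis is genuinely used: on the differential it requires $\epsilon(\iota(y_i)) = y_i$, which holds because $\epsilon \circ \iota = \text{id}_T$, and $\epsilon(z_j) = 0$, which holds precisely because $z_j \in \ker(\epsilon_1)$. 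Both maps are morphisms of DG algebras, so passing to homology yields $k$-algebra homomorphisms $\iota_* \colon H_\bullet(T) \to H_\bullet(S)$ and $\epsilon_* \colon H_\bullet(S) \to H_\bullet(T)$.

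Finally, the chain-level composite $K^T_\bullet \to K^S_\bullet \to K^T_\bullet$ is the identity: it is $\epsilon \circ \iota = \text{id}_T$ on coefficients and $e_{y_i} \mapsto e_{y_i}$ on the exterior part. Hence $\epsilon_* \circ \iota_* = \text{id}_{H_\bullet(T)}$, so $\iota_*$ is an injective $k$-algebra map split by the surjection $\epsilon_*$, which is exactly the assertion that $H_\bullet(T) \subseteq H_\bullet(S)$ is an algebra retract. I expect the main obstacle to be bookkeeping rather than conceptual: one must choose the generators of $\mm_S$ compatibly with those of $\mm_T$ so that both $\iota$ and $\epsilon$ lift to honest DG chain maps, and one must confirm that these liftings respect the multiplicative (algebra) structure of Koszul homology. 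Everything else reduces to the functoriality of homology and the hypothesis $\epsilon \circ \iota = \text{id}_T$.
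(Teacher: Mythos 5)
Your proposal is correct and follows essentially the same route as the paper: both choose a basis of $S_1$ extending (the image of) a basis of $T_1$ by a basis of $\ker\epsilon_1$, lift $\iota$ and $\epsilon$ to maps of Koszul complexes, and conclude $\epsilon_*\circ\iota_*=\mathrm{id}$ by functoriality. The only cosmetic difference is that the paper factors $\epsilon_*$ through $H_\bullet(K(\epsilon(\mathbf{y}_S)))\simeq H_\bullet(K(\mathbf{y}_T))\otimes\bigwedge W$, whereas you define the chain map $K^S_\bullet\to K^T_\bullet$ directly by killing the exterior generators $e_{z_j}$; these amount to the same verification.
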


\begin{proof} This statement is the content of \cite[Proposition 2.4]{OHH}. We shall include the proof for completeness.

Let $\epsilon: S \twoheadrightarrow T$ be the retraction map. We may choose a $k$-basis $\y_S = y_1, \dots, y_p$ of $S_1$ such that for some $q \le p$, $\y_T = y_1, \dots, y_q$ forms a $k$-basis of $T_1$, and such that $\epsilon(y_j) = y_j$ for $j = 1, \dots, q$ and $\epsilon(y_j) = 0$ for $j = q+1, \dots, p$. Let $K(\y_S)$ and $K(\y_T)$ be the Koszul complexes of $S$ and $T$ with respect to $\y_S$ and $\y_T$. Then $H_\bullet(S) = H_\bullet(K(\y_S))$ and $H_\bullet(T) = H_\bullet(K(\y_T))$.
\[
\xymatrix{
&H_\bullet(K(\y_S)) \ar@{->>}[r]^{\epsilon_*} &H_\bullet(K(\y_T)) \ar@{{(}.>>}[ld]^{\text{id}}\\
&H_\bullet(K(\y_T)) \ar@{_{(}->}[u]^{\iota_*}}\\
\]

Observe that the inclusion $\iota: T \hookrightarrow S$ induces a homomorphism $\iota_*: H_\bullet(K(\y_T)) \rightarrow H_\bullet(K(\y_S))$, and the map $\epsilon: S \twoheadrightarrow T$ induces a homomorphism
$$\epsilon_*: H_\bullet(K(\y_S)) \rightarrow H_\bullet(K(\epsilon(\y_S))) \simeq H_\bullet(K(\y_T)) \otimes \left(\bigwedge W\right),$$
where $W$ is a $(p-q+1)$-dimensional $k$-vector space. Particularly, $H_\bullet(K(\y_T))$ is a subalgebra of $H_\bullet(K(\epsilon(\y_S)))$ and is precisely the image of $\epsilon_* \circ \iota_*$. This implies that $\epsilon_* \circ \iota_* = \text{id}_{H_\bullet(K(\y_T))}$, i.e., $H_\bullet(T) \subseteq H_\bullet(S)$ is an algebra retract.
\end{proof}

\begin{corollary} \label{cor.reduction}
Let $A \subseteq B$ be an algebra retract of graded $k$-algebras. Suppose that $A = T/I$ and $B = S/J$, where $T$ and $S$ are standard graded polynomial rings, and $I \subseteq T$ and $J \subseteq S$ are homogeneous ideals containing no forms of degree 1. Then for any $i,j \in \ZZ$, we have
$$\beta_{ij}^T(A) \le \beta_{ij}^S(B).$$
\end{corollary}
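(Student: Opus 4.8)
The plan is to reduce everything to the Koszul-homology statement already established in Lemma \ref{lem.retract}, via the standard identification of graded Betti numbers with Koszul homology that is available precisely because $I$ and $J$ contain no linear forms.

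First I would record the following well-known fact. If $R = P/\aa$ is a standard graded $k$-algebra, where $P$ is a polynomial ring and $\aa \subseteq P$ is a homogeneous ideal containing no forms of degree $1$, then the images in $R$ of the variables of $P$ form a minimal generating set of $\mm_R$; consequently $P_1 \to R_1$ is an isomorphism and the Koszul complex $K(\underline{x}; P)$ on the variables is a resolution of $k$ over $P$. Tensoring with $R$ gives $H_\bullet(R) = H_\bullet(K(\underline{x}; R)) = H_\bullet(R \otimes_P K(\underline{x}; P)) \cong \Tor^P_\bullet(R, k)$, and this isomorphism respects the internal grading. Reading off bidegrees yields
$$\beta_{ij}^P(R) = \dim_k H_i(R)_j$$
for all $i, j$. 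Applying this to $R = A = T/I$ and to $R = B = S/J$ (the hypotheses on $I$ and $J$ guarantee the no-linear-forms condition in each case) converts the desired inequality into the statement $\dim_k H_i(A)_j \le \dim_k H_i(B)_j$.

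Next I would invoke Lemma \ref{lem.retract}. Since $A \subseteq B$ is an algebra retract of (standard) graded $k$-algebras, the lemma produces an algebra retract $H_\bullet(A) \subseteq H_\bullet(B)$ on the level of Koszul homology. In particular the inclusion $\iota_* \colon H_\bullet(A) \to H_\bullet(B)$ admits a left inverse $\epsilon_*$ with $\epsilon_* \circ \iota_* = \mathrm{id}$, so $\iota_*$ is injective. Because the inclusion and retraction witnessing $A \subseteq B$ are homogeneous algebra maps, the induced map $\iota_*$ preserves both the homological degree $i$ and the internal degree $j$. Hence $\iota_*$ restricts to an injection $H_i(A)_j \hookrightarrow H_i(B)_j$ in every bidegree, giving $\dim_k H_i(A)_j \le \dim_k H_i(B)_j$.

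Combining the two steps gives $\beta_{ij}^T(A) = \dim_k H_i(A)_j \le \dim_k H_i(B)_j = \beta_{ij}^S(B)$, as required. The only genuinely delicate point is the bigraded bookkeeping: one must check that the identification $\beta_{ij} = \dim_k H_i(R)_j$ and the retract map of Lemma \ref{lem.retract} are simultaneously compatible with the internal grading, so that injectivity holds bidegree by bidegree rather than merely in each homological degree. This is exactly where the no-linear-forms hypothesis on $I$ and $J$ is essential, as it is what forces the variables of $T$ and $S$ to map to minimal generators of $\mm_A$ and $\mm_B$ and thereby makes the Koszul homology compute the Betti numbers on the nose.
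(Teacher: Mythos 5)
Your proposal is correct and follows exactly the same route as the paper: the paper's own proof is the one-line remark that the corollary follows from Lemma \ref{lem.retract} together with the identification of graded Betti numbers with Koszul homology, and your argument is simply a careful expansion of that remark, including the role of the no-linear-forms hypothesis and the bigraded compatibility of the retract map. Nothing is missing and nothing is done differently in substance.
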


\begin{proof} The statement follows from Lemma \ref{lem.retract} and the fact that graded Betti numbers can be computed from homology of the Koszul complexes.
\end{proof}

\begin{theorem} \label{thm.reduction}
Let $G$ be a simple graph and let $H$ be an induced subgraph of $G$.
\begin{enumerate}
\item $\reg k[H] \le \reg k[G]$ and $\pd k[H] \le \pd k[G]$.
\item If $|E_H| - |V_H| \ge |E_G| - |V_G|$ and $k[H]$ is not Cohen-Macaulay then neither is $k[G]$.
\end{enumerate}
\end{theorem}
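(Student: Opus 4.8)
The plan for (1) is to realize $k[H]\hookrightarrow k[G]$ as an algebra retract and then invoke Corollary \ref{cor.reduction}. The crucial use of induced-ness is that every edge of $G$ with both endpoints in $V_H$ already lies in $E_H$, which is exactly what makes a retraction well-defined. Let $\pi\colon k[V_G]\to k[V_H]$ be the $k$-algebra map fixing each variable of $V_H$ and sending each variable of $V_G\setminus V_H$ to $0$. For a generator $xy$ of $k[G]$ with $\{x,y\}\in E_G$: if both $x,y\in V_H$ then $\{x,y\}\in E_H$ (induced-ness) and $\pi(xy)=xy\in k[H]$, while otherwise $\pi(xy)=0$. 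Thus $\pi$ restricts to a surjection $\epsilon\colon k[G]\twoheadrightarrow k[H]$ with $\epsilon\circ\iota=\mathrm{id}_{k[H]}$. Both rings are standard graded and their toric ideals are generated by binomials of degree $\ge 2$ (the edges map to distinct degree-$2$ monomials, so there are no linear relations), hence contain no degree-$1$ forms; Corollary \ref{cor.reduction} then gives $\beta^{k[E_H]}_{ij}(k[H])\le\beta^{k[E_G]}_{ij}(k[G])$ for all $i,j$. Reading off $\reg$ and $\pd$ as $\max\{j-i\}$ and $\max\{i\}$ over nonvanishing Betti numbers yields (1) at once.

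For (2) I would use the criterion that $k[R]$ is Cohen--Macaulay iff $\pd k[R]=\codim k[R]=|E_R|-\dim k[R]$, together with the standard formula $\dim k[R]=\operatorname{rank}M_R=|V_R|-b_R$, where $b_R$ is the number of bipartite connected components of $R$; hence $\codim k[R]=|E_R|-|V_R|+b_R$. The goal is that the defect $\pd k[R]-\codim k[R]$ does not drop from $H$ to $G$. \emph{When $G$ is connected this is clean.} Since an induced subgraph of a bipartite graph is bipartite, if $G$ is bipartite then so is $H$ and $b_H\ge 1=b_G$, while if $G$ is non-bipartite then $b_G=0\le b_H$; either way $b_G\le b_H$. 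Combined with the hypothesis $|E_G|-|V_G|\le|E_H|-|V_H|$ this gives $\codim k[G]\le\codim k[H]$, whence
$$\pd k[G]\ \ge\ \pd k[H]\ >\ \codim k[H]\ \ge\ \codim k[G],$$
so $k[G]$ is not Cohen--Macaulay.

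To reach the general case I would pass through disjoint unions: $\dim$, $\depth$, hence $\codim$ and the defect are additive over components (as are $\reg,\pd$ by Lemma \ref{lem.disjoint}), so $k[G]$ is Cohen--Macaulay iff each component is, and $k[H]$ non-Cohen--Macaulay means $H\cap G_{j_0}$ is non-Cohen--Macaulay for some component $G_{j_0}$. One would like to delete the other components, preserving the hypothesis, and apply the connected case to $(H\cap G_{j_0},G_{j_0})$. A short column-space argument shows $\codim k[G_j]\ge\codim k[H\cap G_j]$ for every induced piece, and deletion of a component $G_j$ preserves the hypothesis \emph{provided} $|E_{G_j}|-|V_{G_j}|\ge|E_{H\cap G_j}|-|V_{H\cap G_j}|$. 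This is exactly where the main obstacle sits: a component that is a bipartite forest meeting $H$ in nothing (e.g.\ a disjoint edge) has $|E_{G_j}|-|V_{G_j}|<0$ and cannot be deleted without breaking the hypothesis. Such ``compensating'' components let $G_{j_0}$ carry strictly larger cycle-excess than $H\cap G_{j_0}$ (for instance when $G_{j_0}$ is the non-Cohen--Macaulay piece with extra vertices attached that create new cycles), so $\codim k[G_{j_0}]>\codim k[H\cap G_{j_0}]$, the inequality $\codim k[G]\le\codim k[H]$ fails, and the crude bound $\pd k[G]\ge\pd k[H]$ from Corollary \ref{cor.reduction} no longer separates $\pd k[G]$ from $\codim k[G]$.

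Overcoming this is, I expect, the crux. My plan is to upgrade the crude inequality by exploiting the full multiplicative content of Lemma \ref{lem.retract}: $H_\bullet(k[H])$ sits inside $H_\bullet(k[G])$ as an \emph{algebra} retract, so one can multiply a top nonvanishing Koszul class of $k[H]$ (witnessing $\pd k[H]>\codim k[H]$) by classes coming from the edges of $E_G\setminus E_H$ that create the extra cycles, aiming to produce a nonzero class of $H_\bullet(k[G])$ in homological degree exceeding $\codim k[G]$. Concretely I would realize this through Theorem \ref{thm.homology}: augment the degree $\s_0\in\ZZ^{V_H}$ witnessing non-vanishing for $H$ by the contribution of the attached cycles to obtain $\s\in\ZZ^{V_G}$, and show that $\Delta_\s$ decomposes (for the relevant reduced homology) as a join of the degree complex of the non-Cohen--Macaulay piece with the complexes of the attached structure, so a join/K\"unneth computation keeps $\widetilde{H}_{i-1}(\Delta_\s;k)\neq 0$ with $i>\codim k[G]$. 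The delicate point—and the step I anticipate requiring the most care—is the degree bookkeeping that guarantees the surviving class lands strictly above $\codim k[G]$, which is precisely where the numerical hypothesis $|E_H|-|V_H|\ge|E_G|-|V_G|$ must do its real work.
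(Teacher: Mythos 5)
Your part (1) is correct and is exactly the paper's argument: the induced hypothesis is what makes the vertex-killing map $\pi$ restrict to a retraction $k[G]\twoheadrightarrow k[H]$, and Corollary \ref{cor.reduction} does the rest. Your part (2) for connected $G$ is also correct and is, in substance, the paper's proof: the paper writes $\dim k[R]=|V_R|$ (citing Villarreal) and applies Auslander--Buchsbaum twice, which is your computation $\codim k[R]=|E_R|-|V_R|+b_R$ in the special case $b_R=0$; the chain $\pd k[G]\ge\pd k[H]>\codim k[H]\ge\codim k[G]$ is the whole content.

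The gap is in your treatment of disconnected $G$: the join/K\"unneth ``upgrade'' in your last paragraph is a plan, not an argument, so as written part (2) is established only when no connected component of $G$ is bipartite. More importantly, you should not try to complete that plan, because the obstruction you isolated is genuine rather than an artifact of the method. The paper's own proof silently assumes $b_G=0$ at the step ``$\dim k[G]=|V_G|$,'' and once $G$ is allowed a bipartite component the statement itself fails, granting the computation in Example \ref{ex.Kimura}: let $G_0$ be the graph there, with $|E_{G_0}|-|V_{G_0}|=3$ and $k[G_0]$ Cohen--Macaulay, and let $H$ be its induced forbidden subgraph, which satisfies $|E_H|-|V_H|=2$ and is not Cohen--Macaulay. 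Set $G=G_0\sqcup K_2$. Then $H$ is still an induced subgraph of $G$, and $|E_G|-|V_G|=3-1=2=|E_H|-|V_H|$, so the hypothesis of part (2) holds; yet $k[G]\cong k[G_0]\otimes_k k[e]$ is Cohen--Macaulay, since depth and dimension are both additive under the tensor decomposition used in Lemma \ref{lem.disjoint}. The honest conclusion of your analysis is therefore that Theorem \ref{thm.reduction}(2) (and, via the same example, Theorem \ref{thm.nCM}) requires the additional hypothesis that no connected component of $G$ is bipartite --- for instance that $G$ is connected --- and under that hypothesis your first displayed chain of inequalities already finishes the proof, with nothing further needed.
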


\begin{proof} Observe that the natural inclusion $k[H] \hookrightarrow k[G]$ is an algebra retract with retraction map $\epsilon: k[G] \rightarrow k[H]$ defined as follows: for any edge $e = \{x,y\} \in G$,
$$k[G] \ni xy \mapsto \left\{ \begin{array}{lcl} xy & \text{if} & \{x,y\} \in H \\ 0 & \text{if} & \{x,y\} \not\in H. \end{array} \right.$$
Thus, (1) follows directly from Corollary \ref{cor.reduction}.

To prove (2), assume that $k[H]$ is not Cohen-Macaulay. It follows, by Hochster's work \cite{H}, that $k[H]$ is not normal. The main theorem of Ohsugi and Hibi \cite{OH} then implies that $H$ is not bipartite (and, in particular, $G$ is not bipartite). By \cite[Proposition 3.2]{Vill}, we have $\dim k[H] = |V_H|$ and $\dim k[G] = |V_G|$. Since $k[H]$ is not Cohen-Macaulay, we have $\depth k[H] < \dim k[H] = |V_H|$. Thus, by the Auslander-Buchsbaum formula, we have
$\pd k[H] > |E_H| - |V_H|.$
This, together with (1), implies that
$$\pd k[G] > |E_H| - |V_H| \ge |E_G| - |V_G|.$$
By Auslander-Buchsbaum formula again, we have
$$\depth k[G] < |V_G| = \dim k[G].$$
Hence, $k[G]$ is not Cohen-Macaulay, and (2) is proved.
\end{proof}

We are now ready to state our first main theorem, the first part of which recovers \cite[Theorem 2.6]{BOVT}.

\begin{theorem}[\protect{See \cite[Theorem 2.6]{BOVT}}] \label{thm.BOVT}
Let $G$ be a simple graph. Suppose that $G$ contains an induced subgraph which is the disjoint union of graphs $H_1, \dots, H_s$. Then
\begin{enumerate}
\item $\reg k[G] \ge \sum_{i=1}^s \reg k[H_i].$
\item $\pd k[G] \ge \sum_{i=1}^s \pd k[H_i].$
\end{enumerate}
\end{theorem}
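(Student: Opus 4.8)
The strategy is to combine the two tools just developed: the ``reduction'' inequality of Theorem~\ref{thm.reduction}(1), which compares invariants of $k[G]$ with those of an induced subgraph, and an additivity statement (referenced in the introduction as Lemma~\ref{lem.disjoint}) saying that regularity and projective dimension of toric rings add up over disjoint unions of graphs. Write $H = H_1 \sqcup \cdots \sqcup H_s$ for the induced subgraph of $G$ that is the given disjoint union. Since $H$ is an induced subgraph of $G$, Theorem~\ref{thm.reduction}(1) immediately gives
$$\reg k[G] \ge \reg k[H] \quad \text{and} \quad \pd k[G] \ge \pd k[H].$$
So everything reduces to computing $\reg k[H]$ and $\pd k[H]$ in terms of the pieces $H_i$.

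\textbf{Additivity over disjoint unions.}
The remaining content is the identity
$$\reg k[H] = \sum_{i=1}^s \reg k[H_i] \qquad \text{and} \qquad \pd k[H] = \sum_{i=1}^s \pd k[H_i].$$
The key algebraic observation is that when $H$ is a disjoint union, its vertex sets and edge sets are disjoint, so the toric ring $k[H]$ factors as a tensor product $k[H_1] \otimes_k \cdots \otimes_k k[H_s]$ over $k$ (the defining map $\phi_H$ splits into the separate maps $\phi_{H_i}$, since no monomial $xy$ for an edge of $H_i$ shares a variable with any edge of $H_j$ for $j \ne i$). I would then invoke the standard behavior of these invariants under tensor product of $k$-algebras over a field: the minimal free resolution of the tensor product is the tensor product of the minimal free resolutions, from which projective dimension is additive, and regularity is additive as well (both follow from the Künneth formula for Tor, since $\Tor^{}(k[H],k) \cong \bigotimes_i \Tor^{}(k[H_i],k)$ as multigraded vector spaces). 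This is presumably exactly the content of Lemma~\ref{lem.disjoint}, so in the final write-up I would simply cite it rather than reprove it.

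\textbf{Assembling the proof.}
Chaining the two displays gives
$$\reg k[G] \ge \reg k[H] = \sum_{i=1}^s \reg k[H_i]$$
and similarly for $\pd$, which is exactly the claimed pair of inequalities. The only subtlety to be careful about is that Theorem~\ref{thm.reduction}(1) requires $H$ to be an \emph{induced} subgraph, which is guaranteed by hypothesis; no normality or non-bipartiteness assumption is needed here, since we are only using part~(1), not part~(2).

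\textbf{Main obstacle.}
The proof itself is short; the real work has been front-loaded into the two supporting results. The one point demanding genuine care is the additivity of regularity under tensor products (Lemma~\ref{lem.disjoint}). Additivity of projective dimension over a tensor product of $k$-algebras is routine, but regularity behaves additively only because we are working with standard multigraded structures where the Künneth decomposition of the Koszul homology respects the grading that computes regularity; one must confirm that the grading on $k[H]$ induced by $\phi_H$ restricts correctly to each factor so that the top nonvanishing homological degrees genuinely add. Thus the anticipated difficulty lies entirely in Lemma~\ref{lem.disjoint}, and once that is in hand the theorem follows in two lines.
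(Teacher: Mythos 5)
Your proposal is correct and follows the paper's proof exactly: both reduce to the induced subgraph $H = H_1 \sqcup \cdots \sqcup H_s$ via Theorem~\ref{thm.reduction}(1) and then invoke the additivity of $\reg$ and $\pd$ over disjoint unions (Lemma~\ref{lem.disjoint}), which the paper likewise establishes through the tensor-product decomposition $k[H] = k[H_1] \otimes_k \cdots \otimes_k k[H_s]$ and the resulting K\"unneth-type formula for the multigraded Betti numbers.
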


\begin{proof} Let $H$ be the disjoint union of $H_1, \dots, H_s$. Then $H$ is an induced subgraph of $G$. By Theorem \ref{thm.reduction}, we have
\begin{itemize}
\item $\reg k[G] \ge \reg k[H]$; and
\item $\pd k[G] \ge \pd k[H].$
\end{itemize}
The conclusion now follows from Lemma \ref{lem.disjoint} below.
\end{proof}

\begin{lemma} \label{lem.disjoint}
Let $H$ be the disjoint union of simple graphs $H_1, \dots, H_s$. Then
\begin{enumerate}
\item $\reg k[H] = \sum_{i=1}^s \reg k[H_i]$.
\item $\pd k[H] = \sum_{i=1}^s \pd k[H_i]$.
\end{enumerate}
\end{lemma}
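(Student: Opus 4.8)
The plan is to identify $k[H]$ as a tensor product of the rings $k[H_i]$ over the field $k$, and then to read off both invariants from the behavior of graded Betti numbers under such tensor products. By induction on $s$ it suffices to treat the case $s = 2$, so write $H = H_1 \sqcup H_2$.

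First I would establish the ring isomorphism. Since $H$ is a disjoint union, the vertex and edge sets decompose as $V_H = V_{H_1} \sqcup V_{H_2}$ and $E_H = E_{H_1} \sqcup E_{H_2}$. Consequently the ambient polynomial rings factor as $k[E_H] = k[E_{H_1}] \otimes_k k[E_{H_2}]$ and $k[V_H] = k[V_{H_1}] \otimes_k k[V_{H_2}]$, and under these identifications the monomial map $\phi_H$ is the tensor product $\phi_{H_1} \otimes_k \phi_{H_2}$, because no edge of $H_1$ involves a vertex of $H_2$ and vice versa. Taking images yields
$$k[H] \cong k[H_1] \otimes_k k[H_2]$$
as graded $k$-algebras, with each factor sitting over its own polynomial ring.

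Next I would pass to minimal free resolutions. Let $F_\bullet \to k[H_1]$ and $G_\bullet \to k[H_2]$ be the minimal graded free resolutions over $k[E_{H_1}]$ and $k[E_{H_2}]$, respectively. Because $k$ is a field, the tensor product complex $F_\bullet \otimes_k G_\bullet$ is acyclic and resolves $k[H_1] \otimes_k k[H_2] = k[H]$ over $k[E_H]$; moreover it is minimal, since the entries of its differential still lie in the homogeneous maximal ideal of $k[E_H]$. Comparing graded ranks gives the convolution formula
$$\beta_{i,j}(k[H]) = \sum_{\substack{i_1 + i_2 = i \\ j_1 + j_2 = j}} \beta_{i_1,j_1}(k[H_1])\,\beta_{i_2,j_2}(k[H_2]).$$

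Finally I would extract both statements. From the convolution formula, the largest homological degree carrying a nonzero Betti number is $\pd k[H_1] + \pd k[H_2]$, which gives additivity of the projective dimension; and since $(j_1 + j_2) - (i_1 + i_2) = (j_1 - i_1) + (j_2 - i_2)$, the maximum of $j - i$ over indices with $\beta_{i,j} \ne 0$ equals the sum of the two individual maxima, which gives additivity of the regularity. The only point requiring genuine care is the verification that $F_\bullet \otimes_k G_\bullet$ is a \emph{minimal} resolution of $k[H]$ over the correct ambient ring $k[E_H]$; once the tensor decomposition of $\phi_H$ is in place this is the standard K\"unneth argument over a field, so I expect no serious obstacle.
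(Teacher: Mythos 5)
Your proposal is correct and follows essentially the same route as the paper: decompose $k[E_H]$ and $k[H]$ as tensor products over $k$, observe that the tensor product of the minimal free resolutions is the minimal free resolution of $k[H]$, and read off the additivity of $\pd$ and $\reg$ from the resulting convolution formula for the graded Betti numbers. The paper states this more tersely (and in multigraded form), but the content and the one point you flag as needing care --- minimality of the tensored resolution --- are identical.
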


\begin{proof} Suppose that $H = (V,E)$ and $H_i = (V_i,E_i)$ for $i = 1, \dots, s$. It is easy to see that
$$k[E] = k[E_1] \otimes_k \dots \otimes_k k[E_s] \text{ and } k[H] = k[H_1] \otimes_k \dots \otimes k[H_s].$$
Thus, the minimal free resolution of $k[H]$ as a $k[E]$-module is obtained by taking the tensor product of those of $k[H_i]$ (as a $k[E_i]$-module). In particular, we have
$$\beta_{t, \s}(k[H]) = \sum_{\substack{t_1 + \dots + t_s = t \\ \s_1 + \dots + \s_s = \s}} \beta_{t_i,\s_i}(k[H_i]).$$
The assertion now follows by the definition of regularity and projective dimension.
\end{proof}

As a consequence of Theorem \ref{thm.BOVT}, we also recover \cite[Theorem 1.1]{BOVT} and prove a similar statement for the projective dimension. Recall that a \emph{complete bipartite} graph $K_{u,v}$ is the graph consisting of two disjoint subsets of the vertices $X$ and $Y$, where $|X| = u$ and $|Y|=v$, and edges $\{xy ~|~ x \in X, y \in Y\}$.

\begin{theorem}[\protect{See \cite[Theorem 1.1]{BOVT}}] \label{thm.BOVT1}
Let $G$ be a simple graph. Suppose that $G$ contains an induced subgraph which is the disjoint union of complete bipartite graphs $K_{m_1,n_1}, \dots, K_{m_s,n_s}$. Then
\begin{enumerate}
\item $\reg k[G] \ge \sum_{i=1}^s \min\{m_i,n_i\} - s.$
\item $\pd k[G] \ge \sum_{i=1}^s (m_i-1)(n_i-1).$
\end{enumerate}
\end{theorem}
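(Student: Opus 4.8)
The plan is to apply Theorem \ref{thm.BOVT} with the induced subgraph being a disjoint union of complete bipartite graphs, so the problem reduces to computing $\reg k[K_{m,n}]$ and $\pd k[K_{m,n}]$ for a single complete bipartite graph $K_{m,n}$. Once these two invariants are known for a single $K_{m,n}$, parts (1) and (2) follow immediately: by Theorem \ref{thm.BOVT}(1) we get $\reg k[G] \ge \sum_{i=1}^s \reg k[K_{m_i,n_i}]$, and by Theorem \ref{thm.BOVT}(2) we get $\pd k[G] \ge \sum_{i=1}^s \pd k[K_{m_i,n_i}]$, so the entire content of the theorem is the single-graph computation.

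First I would identify the toric ring $k[K_{m,n}]$ explicitly. Writing $X = \{x_1,\dots,x_m\}$ and $Y = \{y_1,\dots,y_n\}$, the map $\phi_{K_{m,n}}$ sends the edge variable $e_{ij}$ to $x_i y_j$, so $k[K_{m,n}]$ is the subalgebra of $k[x_1,\dots,x_m,y_1,\dots,y_n]$ generated by all products $x_i y_j$. This is exactly the Segre product of the polynomial rings $k[x_1,\dots,x_m]$ and $k[y_1,\dots,y_n]$, equivalently the homogeneous coordinate ring of the Segre embedding $\PP^{m-1}\times\PP^{n-1}\hookrightarrow\PP^{mn-1}$, and its defining ideal (the kernel of $\phi_{K_{m,n}}$) is the ideal of $2\times 2$ minors of a generic $m\times n$ matrix of variables. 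This identification is the crux of the proof, since the homological invariants of the determinantal ring of maximal-sized submatrices of a generic matrix are classical and completely understood.

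Next I would invoke the known resolution of the ideal of $2\times 2$ minors of a generic matrix, namely the Eagon--Northcott complex (or the standard results on determinantal rings). These give, for the Segre product $k[K_{m,n}]$: the Krull dimension is $m+n-1$, the ring is Cohen--Macaulay, and hence by Auslander--Buchsbaum $\pd k[K_{m,n}] = mn - (m+n-1) = (m-1)(n-1)$, which is exactly the summand in part (2). For the regularity, the Castelnuovo--Mumford regularity of this determinantal ring is $\min\{m,n\} - 1$ (the length of the linear strand of the Eagon--Northcott-type resolution), which gives precisely the summand $\min\{m_i,n_i\} - 1$ appearing in part (1). Summing over $i=1,\dots,s$ produces the stated lower bounds.

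The main obstacle is not conceptual but bibliographic and notational: one must correctly cite or verify the values $\reg k[K_{m,n}] = \min\{m,n\}-1$ and $\pd k[K_{m,n}] = (m-1)(n-1)$, taking care with the convention for regularity (the $-s$ in part (1) arises precisely because each of the $s$ Segre factors contributes $\min\{m_i,n_i\}-1$). A clean way to package this, avoiding a detour through the Eagon--Northcott complex, is to observe that $k[K_{m,n}]$ is the Segre product $k[X]\# k[Y]$ and cite the known multigraded Betti numbers of Segre products of polynomial rings, or simply reference the determinantal-ring literature for the $2\times 2$ minors of a generic matrix. Care must also be taken that the subgraphs $K_{m_i,n_i}$ are genuinely induced and pairwise vertex-disjoint inside $G$, but this is guaranteed by the hypothesis that their disjoint union is an induced subgraph, so Theorem \ref{thm.BOVT} applies verbatim.
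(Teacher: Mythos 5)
Your proposal is correct and follows essentially the same route as the paper: reduce via Theorem \ref{thm.BOVT} to a single complete bipartite graph, identify $k[K_{m,n}]$ with the coordinate ring of the Segre embedding $\PP^{m-1}\times\PP^{n-1}\hookrightarrow\PP^{mn-1}$, obtain the regularity $\min\{m,n\}-1$ from the known resolution (the paper cites Cox--Materov), and get the projective dimension $(m-1)(n-1)$ from Cohen--Macaulayness plus Auslander--Buchsbaum. This matches the paper's Lemma \ref{lem.Kst} and its proof almost verbatim.
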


\begin{proof} The conclusion follows from Theorem \ref{thm.BOVT} and Lemma \ref{lem.Kst} below.
\end{proof}

Part (1) of the following lemma is a direct consequence of \cite[Lemma 3.1]{CM}. It was also proved by a different method in \cite{BOVT}.

\begin{lemma} \label{lem.Kst}
Let $K_{u,v}$ be a complete bipartite graph. Then
\begin{enumerate}
\item $\reg k[K_{u,v}] = \min\{u-1,v-1\}.$
\item $\pd k[K_{u,v}] = (u-1)(v-1).$
\end{enumerate}
\end{lemma}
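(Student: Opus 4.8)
The plan is to identify $k[K_{u,v}]$ with a classical object. Writing the edges of $K_{u,v}$ as $e_{ij} = \{x_i,y_j\}$ for $1 \le i \le u$ and $1 \le j \le v$, the map $\phi_{K_{u,v}}$ sends $e_{ij} \mapsto x_iy_j$, so $k[K_{u,v}]$ is the Segre product $k[x_1,\dots,x_u]\,\#\,k[y_1,\dots,y_v]$, equivalently the homogeneous coordinate ring of the Segre embedding $\PP^{u-1}\times\PP^{v-1} \hookrightarrow \PP^{uv-1}$. Since $(x_iy_j)(x_ky_l) = (x_iy_l)(x_ky_j)$, the toric ideal (the kernel of $\phi_{K_{u,v}}$) is generated by the $2\times 2$ minors of the generic $u\times v$ matrix $(e_{ij})$. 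I would open by recording these two facts, which place us squarely in the theory of determinantal rings.

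Part (2) I expect to be routine. By the Hochster--Eagon theorem determinantal rings are Cohen--Macaulay (alternatively, $K_{u,v}$ is bipartite, so $k[K_{u,v}]$ is normal by the Ohsugi--Hibi criterion and hence Cohen--Macaulay by Hochster's theorem \cite{H}). The height of the ideal of $2\times 2$ minors of a generic $u\times v$ matrix is $(u-1)(v-1)$, so $\dim k[K_{u,v}] = uv - (u-1)(v-1) = u+v-1$. Since $k[E]$ has exactly $uv$ variables and $k[K_{u,v}]$ is Cohen--Macaulay, the Auslander--Buchsbaum formula yields
$$\pd k[K_{u,v}] = uv - \depth k[K_{u,v}] = uv - \dim k[K_{u,v}] = uv - (u+v-1) = (u-1)(v-1),$$
which is part (2).

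For part (1) I would use that, for a Cohen--Macaulay standard graded algebra $R$ of dimension $d$, one has $\reg R = a(R) + d$, where $a(R) = \max\{\, t : H^d_{\MM}(R)_t \ne 0\,\}$ is the $a$-invariant. To compute $a(k[K_{u,v}])$ I would pass to $X = \PP^{u-1}\times\PP^{v-1}$ with its Segre line bundle: for $\ell \ge 2$ the graded piece $H^\ell_{\MM}(R)_t$ is $H^{\ell-1}(X,\O(t,t))$, and by the K\"unneth formula the top sheaf cohomology $H^{u+v-2}(X,\O(t,t))$ is $H^{u-1}(\PP^{u-1},\O(t)) \otimes H^{v-1}(\PP^{v-1},\O(t))$, which is nonzero exactly when $t \le -u$ and $t \le -v$, i.e. $t \le -\max\{u,v\}$ (equivalently, via the Goto--Watanabe K\"unneth formula for Segre products, the top local cohomology $H^{u+v-1}_{\MM}(k[K_{u,v}])$ is built from $H^u_{\mm}(k[x_1,\dots,x_u])$ and $H^v_{\mm}(k[y_1,\dots,y_v])$). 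Hence $a(k[K_{u,v}]) = -\max\{u,v\}$ and
$$\reg k[K_{u,v}] = -\max\{u,v\} + (u+v-1) = \min\{u,v\} - 1 = \min\{u-1,v-1\},$$
which is part (1). As an alternative, since the ring is Cohen--Macaulay its regularity equals the degree of the $h$-polynomial in its Hilbert series $\sum_{t \ge 0}\binom{t+u-1}{u-1}\binom{t+v-1}{v-1}\,z^t = h(z)/(1-z)^{u+v-1}$, and one checks $\deg h = \min\{u-1,v-1\}$.

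The main obstacle is the regularity computation in part (1): part (2) reduces to Cohen--Macaulayness plus a height count, whereas part (1) requires pinning down the $a$-invariant, for which one must track the grading and cohomological-degree shifts in the K\"unneth formula for Segre products correctly (or, on the $h$-polynomial route, evaluate a binomial identity). I would be careful to verify that the top local cohomology is genuinely the only contributor to the regularity --- guaranteed here by Cohen--Macaulayness --- so that no lower term $a_i(R)+i$ exceeds $a(R)+d$.
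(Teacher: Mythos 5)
Your proposal is correct, and parts of it diverge from the paper's route in an interesting way. For part (2) you do essentially what the paper does: establish Cohen--Macaulayness (the paper cites St\"uckrad--Vogel, you invoke Hochster--Eagon or the Ohsugi--Hibi/Hochster combination), compute $\dim k[K_{u,v}] = u+v-1$ (the paper cites \cite[Proposition 3.2]{Vill}, you count the height of the ideal of $2\times 2$ minors of a generic $u\times v$ matrix), and finish with Auslander--Buchsbaum. For part (1), however, the paper simply identifies $k[K_{u,v}]$ with the coordinate ring of the Segre embedding $\PP^{u-1}\times\PP^{v-1}\hookrightarrow\PP^{uv-1}$ and quotes \cite[Lemma 3.1]{CM} as a black box, whereas you actually carry out the regularity computation: since the ring is Cohen--Macaulay, $\reg = a(R)+\dim R$, and you pin down $a(R) = -\max\{u,v\}$ via the K\"unneth/Goto--Watanabe description of the top local cohomology of the Segre product, which gives $\reg = \min\{u,v\}-1$; your $h$-polynomial alternative is equally valid. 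Your version is self-contained where the paper's is a citation, at the cost of needing the local cohomology formalism for Segre products; the one point you should make explicit if writing this up is why the kernel of $\phi_{K_{u,v}}$ is exactly the ideal of $2\times 2$ minors (the minors visibly lie in the kernel, and equality follows since both are primes of the same height, or from the standard description of toric ideals of Segre embeddings).
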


\begin{proof} It is easy to see that $k[K_{u,v}]$ can be viewed as the coordinate ring of the Segre embedding $\PP^{u-1} \times \PP^{v-1} \longrightarrow \PP^{uv-1}$. Part (1) now follows from \cite[Lemma 3.1]{CM}.

To prove part (2), observe that $k[K_{u,v}]$ is Cohen-Macaulay (cf. \cite{SV}). Thus, by \cite[Proposition 3.2]{Vill}, we have $\pd k[K_{u,v}] = uv - \dim k[K_{u,v}] = uv - (u+v-1) = (u-1)(v-1).$
\end{proof}


\section{A forbidden structure and its degree complex} \label{sec.Delta}

Our preliminary result, Theorem \ref{thm.reduction}, allows us to focus on structures that prevent $k[G]$ from being Cohen-Macaulay. In this section, we shall consider such a forbidden structure and, for a suitable multidegree $\s$, describe its degree complex $\Delta_\s$. This degree complex allows us to compute certain graded Betti number of $k[G]$, based on Theorem \ref{thm.homology}, and to conclude that $k[G]$ is not Cohen-Macaulay in this case.

Our forbidden structure is a graph which consists of exactly two induced odd cycles $C_1$ and $C_2$, which are connected by two vertex-disjoint (except possibly at their endpoints) paths $P_1$ and $P_2$ of length $\ge 2$, as depicted in Figure \ref{fig.forbidden}. Throughout this section, we shall assume that $G$ is such a graph.

\begin{notation} \label{not.basic}
Throughout this section, we shall label the vertices and edges of $G$ in the following more convenient way:
\begin{itemize}
\item $V_{C_1} = \{x_1, \dots, x_{2l+1}\}$, $E_{C_1} = \{e_1, \dots, e_{2l+1}\}$, where $e_j = \{x_j, x_{j+1}\}$.
\item $V_{C_2} = \{y_1, \dots, y_{2l'+1}\}$, $E_{C_2} = \{e_1', \dots, e_{2l'+1}'\}$, where $e_j' = \{y_j, y_{j+1}\}$.
\item $V_{P_1} = \{z_0, \dots, z_p\}$, $E_{P_1} = \{f_1, \dots, f_p\}$, where $f_j = \{z_{j-1}, z_j\}$, $z_0 \in V_{C_1}$ and $z_p \in V_{C_2}$.
\item $V_{P_2} = \{w_0, \dots, w_q\}$, $E_{P_2} = \{f_1', \dots, f_q'\}$, where $f_j' = \{w_{j-1}, w_j\}$, $w_0 \in V_{C_1}$ and $w_q \in V_{C_2}$.
\end{itemize}

\begin{figure}[h!]
\centering
\includegraphics[width=\textwidth]{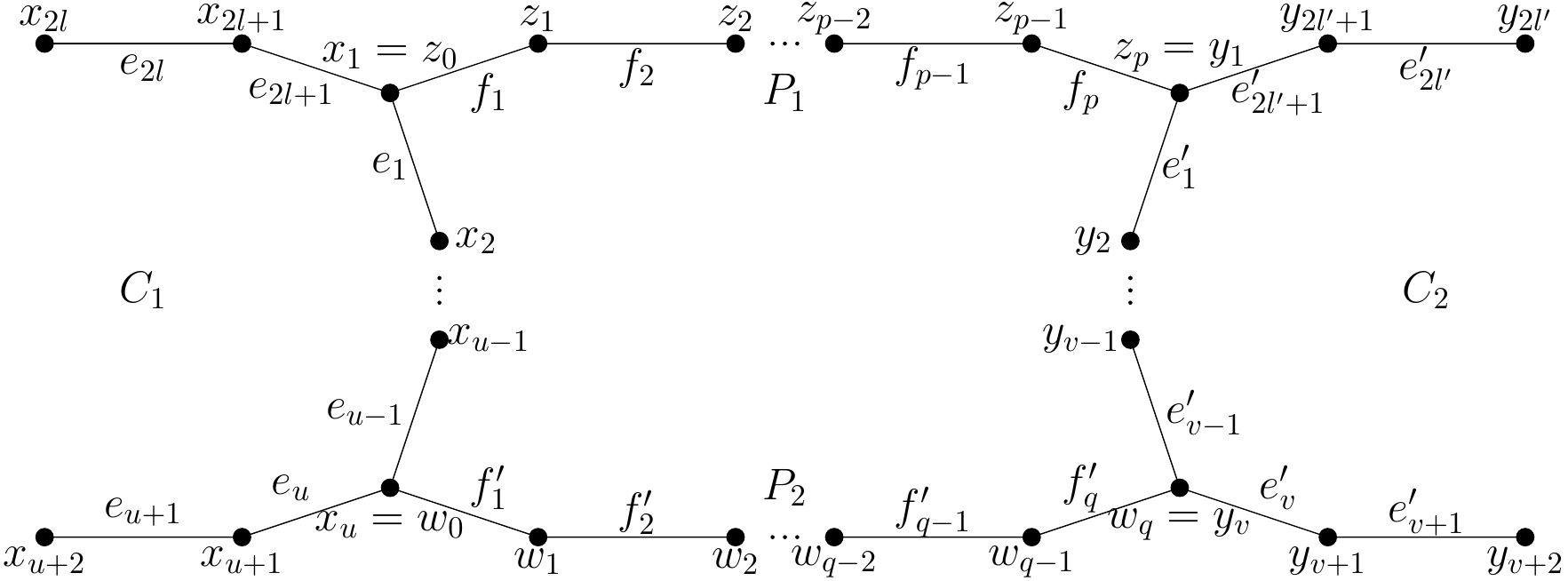}
\centering\caption{A forbidden structure.}\label{fig.forbidden}
\end{figure}
\end{notation}

Our choice of $\s = (s_x ~|~ x \in V) \in \ZZ^V$ is given as follows:
$$s_x = 1 + \big| \{j ~|~ x \in P_j\} \big|.$$

\begin{remark} \label{rmk.coefficient}
It can be seen that in the decomposition $\s=\sum_{e \in E} c_e \a_e$, the coefficients $c_{f_1}, \dots, c_{f_p}$ on path $P_1$ are completely determined given $c_{f_1}$. More specifically, since $s_{z_1} = 2$, we must have $c_{f_1} \le 2$. If $c_{f_1} = 2$ then it forces $c_{f_2} = 0$. This, together with $s_{z_2} = 2$, again forces $c_{f_3} = 2$. Keep going in this fashion, we have $c_{f_j} = 2$ if $j$ is odd and $c_{f_j} = 0$ if $j$ is even. The situation is similar if we start with $c_{f_1} = 0$. If $c_{f_1} = 1$, then since $s_{z_1} = 2$, we must have $c_{f_2} = 1$. Again, since $s_{z_2} = 2$, we must have $c_{f_3} = 1$. Keep going in this fashion, we have $c_{f_j} = 1$ for all $1 \le j \le p$. The same observation also works for the coefficients $c_{f_1'}, \dots, c_{f_q'}$ on path $P_2$.

It can be further seen that for a vertex $x_i \not\equiv z_0, w_0$ on $C_1$, since $s_{x_i} = 1$, we must have either $c_{e_{i-1}} = 0$ and $c_{e_i} = 1$ or $c_{e_{i-1}} = 1$ and $c_{e_i} = 0$ (i.e., knowing one of the coefficients $\{c_{e_{i-1}}, c_{e_i}\}$ determines the other one.) The exception to this rule is when $x_i \equiv z_0$ or $x_i \equiv w_0$, where the determination of $c_{e_{i-1}}$ and $c_{e_i}$ also depends on the value of $c_{f_1}$ or $c_{f_1'}$. The same observation also works for the coefficients of edges on $C_2$.
\end{remark}

\begin{proposition} \label{lem.facets}
$\Delta_\s$ has exactly 4 facets which can be explicitly described.
\end{proposition}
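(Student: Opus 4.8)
The plan is to exploit Remark~\ref{rmk.coefficient}, which shows that every decomposition $\s = \sum_{e \in E} c_e \a_e$ is almost completely rigid once we fix the behavior on the two connecting paths. First I would record that on each path $P_j$ the coefficient vector has only two possibilities: either it is constantly $1$ (the ``path'' case), or it alternates between $2$ and $0$ starting and ending with $2$ at the path endpoints adjacent to the cycles (the ``doubling'' case). Note that the alternating case is only consistent when $p$ (respectively $q$) is odd, since the pattern must read $2,0,2,\dots,2$ and terminate with a $2$; I would check this parity compatibility explicitly, as it governs which facets actually occur. This gives at most two choices on $P_1$ and at most two choices on $P_2$, hence at most four combinations, and each combination fixes the values of $c_{f_1}$ and $c_{f_1'}$, which in turn (again by the remark) feed into the forced alternation of edge-coefficients around each odd cycle $C_1$ and $C_2$.

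Next I would analyze the cycles. Because $s_{x_i} = 1$ for every cycle vertex other than the attaching vertices $z_0, w_0$, the remark forces the edge coefficients around $C_1$ to alternate $0,1,0,1,\dots$; the only freedom is a global ``phase,'' and this phase is pinned down by the amount of weight already deposited at $z_0$ and $w_0$ by the chosen path configuration. Here the \emph{oddness} of the cycle is essential: around an odd cycle an alternating $0/1$ pattern cannot close up consistently unless the boundary contributions from the two attaching points resolve the parity obstruction, so for each admissible pair of path configurations there is a \emph{unique} way to complete the coefficients on $C_1$, and likewise on $C_2$. Thus each of the four path-combinations extends to exactly one full decomposition $\s = \sum c_e \a_e$, and its support $\{e : c_e > 0\}$ is a single facet of $\Delta_\s$. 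I would then verify that the four supports are genuinely distinct (they differ already in the support pattern on at least one path or in the induced phase on a cycle) and that none is contained in another, so all four are facets.

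The main obstacle I anticipate is the bookkeeping at the attaching vertices $z_0, w_0$ (and their counterparts on $C_2$), where the path coefficients and the cycle coefficients interact: one must check that the total weight $s_{z_0} = 1$ (or $2$, if $z_0$ happens to coincide with $w_0$) is met \emph{exactly}, and that this compatibility selects the correct cycle phase rather than over- or under-determining it. I would handle this by treating the attaching vertices as the boundary data of the cycle recursion and solving the resulting parity condition cycle by cycle, using odd length to guarantee a unique solution. Once the four decompositions are written down explicitly, identifying their supports as the four facets and confirming maximality is routine, and I would present the four facets in a table or itemized list indexed by the pair (path/doubling on $P_1$, path/doubling on $P_2$), which also sets up the homology computation of $\Delta_\s$ needed in the next section.
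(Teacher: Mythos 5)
Your overall strategy---use the rigidity from Remark~\ref{rmk.coefficient} to show each decomposition of $\s$ is determined by a small amount of initial data, then enumerate---is the same as the paper's. But your enumeration contains a concrete miscount that breaks the argument. On a path $P_1$ there are \emph{three} possible coefficient patterns, not two: the constant pattern $c_{f_j}=1$, and \emph{two} alternating patterns distinguished by their phase ($c_{f_1}=2$ versus $c_{f_1}=0$). Both alternating phases are realized and give distinct facets (these are Cases 1a and 1c in the paper), so collapsing them into a single ``doubling'' case loses facets. Worse, your parity claim---that the alternating pattern is only consistent when $p$ is odd because it ``must terminate with a $2$''---is false: the terminal vertex $z_p$ lies on the cycle $C_2$ and has $s_{z_p}\in\{2,3\}$, so when $c_{f_p}=0$ the remaining weight is simply absorbed by the cycle edges and (possibly) $f_q'$. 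The paper's Case 1a with $p$ even, yielding the facet $G_{11}'$, is exactly such a configuration. If your parity restriction were correct, $\Delta_\s$ would have fewer than $4$ facets whenever $p$ or $q$ is even, contradicting the proposition.

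The second structural error is the claim that the four facets are indexed by independent choices (path/doubling on $P_1$) $\times$ (path/doubling on $P_2$). The choices are not independent: the weight equation at the attachment vertex (e.g.\ $c_{f_1}+c_{f_1'}+c_{e_1}+c_{e_{2l+1}}=s_{x_1}=3$ in Case 1) combined with the odd-cycle parity obstruction rules out both the (constant, constant) and (alternating, alternating) combinations, and forces exactly one path to carry the constant pattern. The correct count is $1+1+2=4$: one facet for $c_{f_1}=2$, one for $c_{f_1}=0$, and two for $c_{f_1}=1$ (split by the phase $c_{e_1}\in\{0,1\}$ on $C_1$, which then determines $c_{f_1'}\in\{0,2\}$). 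Finally, your bookkeeping at the attaching vertices is off by one: since $z_0\in P_1$ by definition, $s_{z_0}=1+|\{j: z_0\in P_j\}|$ equals $2$ or $3$, never $1$. You would also need to run the analysis separately for the three configurations of shared endpoints (both, one, or none), since these change the weights at the attachment vertices, as the paper does in its Cases 1--3.
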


\begin{proof} Consider an expression $\s = \sum_{e \in E} c_e \a_e$. Our argument is a case by case analysis. We consider the following cases depending on the endpoints of $P_1$ and $P_2$, which in turn have their subcases depending on the value of $c_{f_1}$.

\noindent{\bf Case 1:} $P_1$ and $P_2$ share both endpoint vertices. Without loss of generality, suppose that $z_0 = w_0 = x_1$ and $z_p = w_q = y_1$. Since $s_{z_1} = s_{w_1} = 2$, we must have $c_{f_1}, c_{f_1'} \le 2$.

\noindent{\it Case 1a:} $c_{f_1} = 2$. It follows from  Remark \ref{rmk.coefficient} that, in this case, $c_{f_j} = 2$ if $j$ is odd and $c_{f_j} = 0$ if $j$ is even. Observe that since $s_{x_1} = 3$, we cannot have both $c_{e_1} = 1$ and $c_{e_{2l+1}} = 1$ (and $c_{f_1} = 2$). Thus, by Remark \ref{rmk.coefficient}, we must have $c_{e_j} = 0$ for $j$ odd and $c_{e_j} = 1$ for $j$ even. This, together with the fact that $s_{x_1} = 3$, forces $c_{f_1'} = 1$. Again, by Remark \ref{rmk.coefficient}, we deduce that $c_{f_j'} = 1$ for all $j = 1, \dots, q$.

If $p$ is odd then since $c_{f_p} = 2$, $c_{f_q'} = 1$ and $s_{y_1} = 3$, we must have $c_{e_1'} = c_{e_{2l'+1}'} = 0$. This implies, by Remark \ref{rmk.coefficient}, that $c_{e_j'} = 0$ if $j$ is odd and $c_{e_j'} = 1$ if $j$ is even. Thus, the expression $\s = \sum_{e \in E} c_e\a_e$ is uniquely determined, which gives the following facet of $\Delta_\s$:
\begin{align*}
G_{11} = & \{e_j ~|~ 1 \le j \text{ even } \le 2l+1 \} \cup \{f_j ~|~ 1 \le j \text{ odd } \le p\} \\
& \cup \{f_j' ~|~ 1\le j \le q\} \cup \{e_j' ~|~ 1 \le j \text{ even } \leq 2l'+1\}.
\end{align*}
\begin{figure}[h!]
\centering
\includegraphics[height=2in]{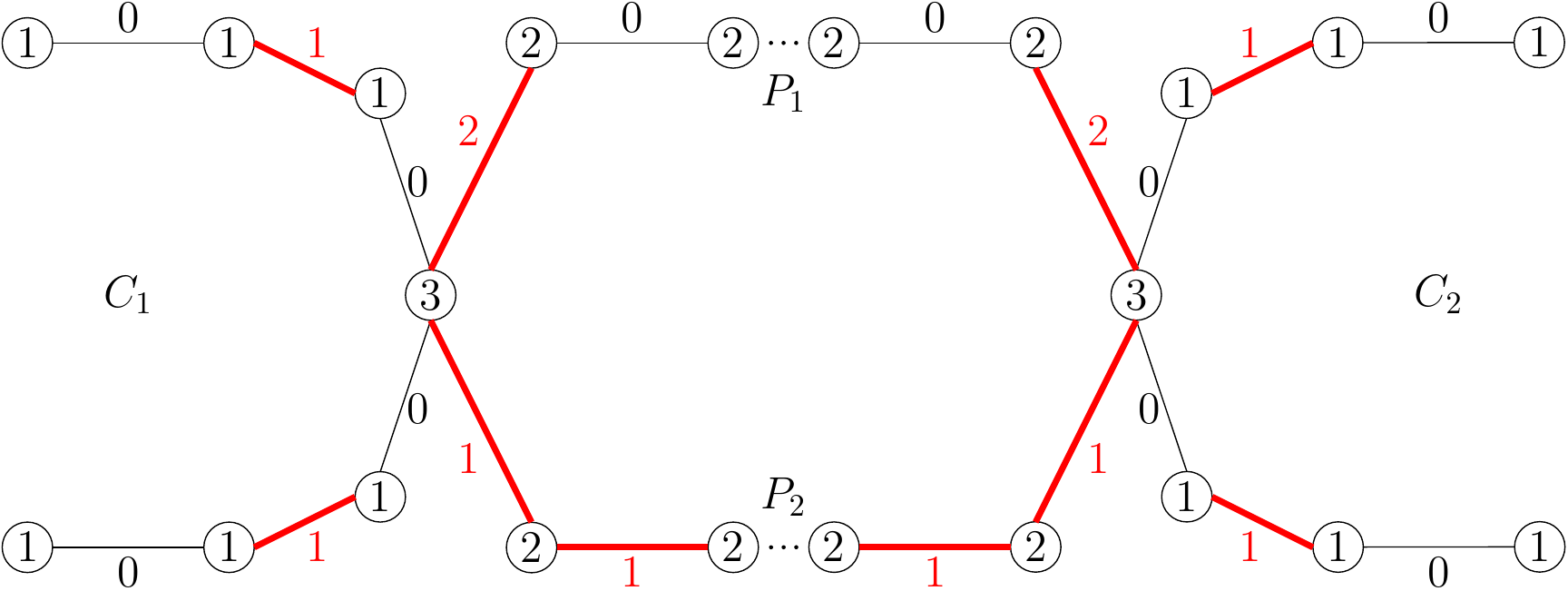}
\centering\caption{A facet of $\Delta_\s$: Case 1a when $p$ is odd.}\label{fig.1a}
\end{figure}

If $p$ is even then since $c_{f_p} = 0$, $c_{f_q'} = 1$ and $s_{y_1} = 3$, we must have $c_{e_1'} = c_{e_{2l'+1}'} = 1$. This implies, by Remark \ref{rmk.coefficient}, that $c_{e_j'} = 1$ if $j$ is odd and $c_{e_j'} = 0$ if $j$ is even. Thus, the expression $\s = \sum_{e \in E} c_e\a_e$, in this case, gives the following facet of $\Delta_\s$ instead:
\begin{align*}
G_{11}' = & \{e_j ~|~ 1 \le j \text{ even } \le 2l+1 \} \cup \{f_j ~|~ 1 \le j \text{ odd } \le p\} \\
& \cup \{f_j' ~|~ 1\le j \le q\} \cup \{e_j' ~|~ 1 \le j \text{ odd } \leq 2l'+1\}.
\end{align*}

\noindent{\it Case 1b:} $c_{f_1} = 1$. By Remark \ref{rmk.coefficient}, we have $c_{f_j} = 1$ for all $j = 1, \dots, p$. Suppose first that $c_{e_1} > 0$. Then $c_{e_1} = 1$, and it follows from Remark \ref{rmk.coefficient} that $c_{e_j} = 1$ if $j$ is odd and $c_{e_j} = 0$ if $j$ is even. Since $s_{x_1} = 3$ and $c_{f_1} = c_{e_1} = c_{e_{2l+1}} = 1$, we must have $c_{f_1'} = 0$. Remark \ref{rmk.coefficient} gives us that $c_{f_j'} = 0$ if $j$ is odd and $c_{f_j'} = 2$ if $j$ is even.

If $q$ is odd then since $c_{f_p} = 1$ and $c_{f_q'} = 0$, we must have $c_{e_1'} = c_{e_{2l'+1}'} = 1$. This implies, by Remark \ref{rmk.coefficient}, that $c_{e_j'} = 1$ if $j$ is odd and $c_{e_j'} = 0$ if $j$ is even, and eventually determines the following facet of $\Delta_\s$:
\begin{align*}
G_{21} = & \{e_j ~|~ 1 \le j \text{ odd } \le 2l+1 \} \cup \{f_j ~|~ 1 \le j \le p\} \\
& \cup \{f_j' ~|~ 1\le j \text{ even }\le q\} \cup \{e_j' ~|~ 1 \le j \text{ odd } \leq 2l'+1\}.
\end{align*}

\begin{figure}[h!]
\centering
\includegraphics[height=2in]{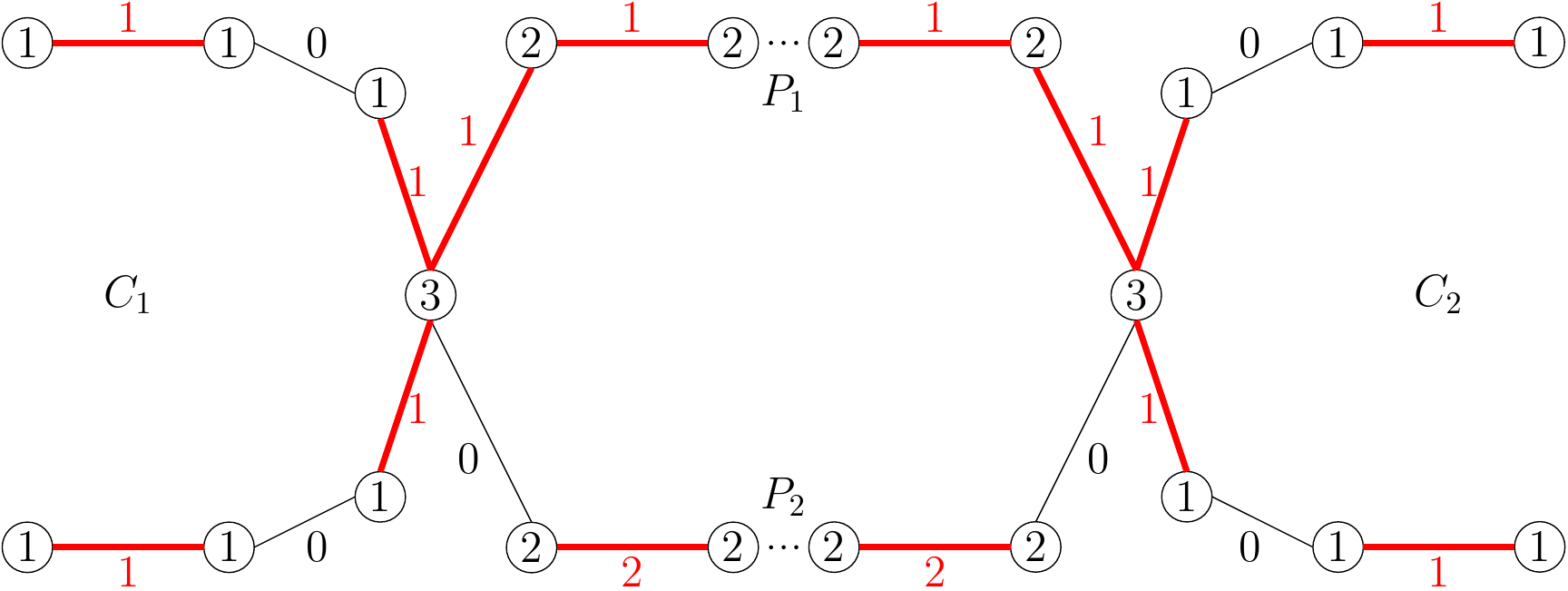}
\centering\caption{A facet of $\Delta_\s$: Case 1b when $c_{e_1} = 1$ and $q$ is odd.}\label{fig.1b1}
\end{figure}

If $q$ is even then similarly, we obtain the following facet of $\Delta_\s$ instead:
\begin{align*}
G_{21}' = & \{e_j ~|~ 1 \le j \text{ odd } \le 2l+1 \} \cup \{f_j ~|~ 1 \le j \le p\} \\
& \cup \{f_j' ~|~ 1\le j \text{ even }\le q\} \cup \{e_j' ~|~ 1 \le j \text{ even } \leq 2l'+1\}.
\end{align*}

Suppose, on the other hand, that $c_{e_1} = 0$. Then by Remark \ref{rmk.coefficient}, we have $c_{e_j} = 0$ if $j$ is odd and $c_{e_j} = 1$ if $j$ is even. This, together with the fact that $c_{f_1} = 1$ and $s_{x_1} = 3$, forces $c_{f_1'} = 2$. It follows, by Remark \ref{rmk.coefficient}, that $c_{f_j'} = 2$ if $j$ is odd and $c_{f_j'} = 0$ if $j$ is even.

Now, if $q$ is odd then, by a similar argument as above, we obtain the facet
\begin{align*}
G_{22} = & \{e_j ~|~ 1 \le j \text{ even } \le 2l+1 \} \cup \{f_j ~|~ 1 \le j \le p\} \\
& \cup \{f_j' ~|~ 1\le j \text{ odd }\le q\} \cup \{e_j' ~|~ 1 \le j \text{ even } \leq 2l'+1\},
\end{align*}
and if $q$ is even then we obtain the facet
\begin{align*}
G_{22}' = & \{e_j ~|~ 1 \le j \text{ even } \le 2l+1 \} \cup \{f_j ~|~ 1 \le j \le p\} \\
& \cup \{f_j' ~|~ 1\le j \text{ odd }\le q\} \cup \{e_j' ~|~ 1 \le j \text{ odd } \leq 2l'+1\}.
\end{align*}

\begin{figure}[h!]
\centering
\includegraphics[height=2in]{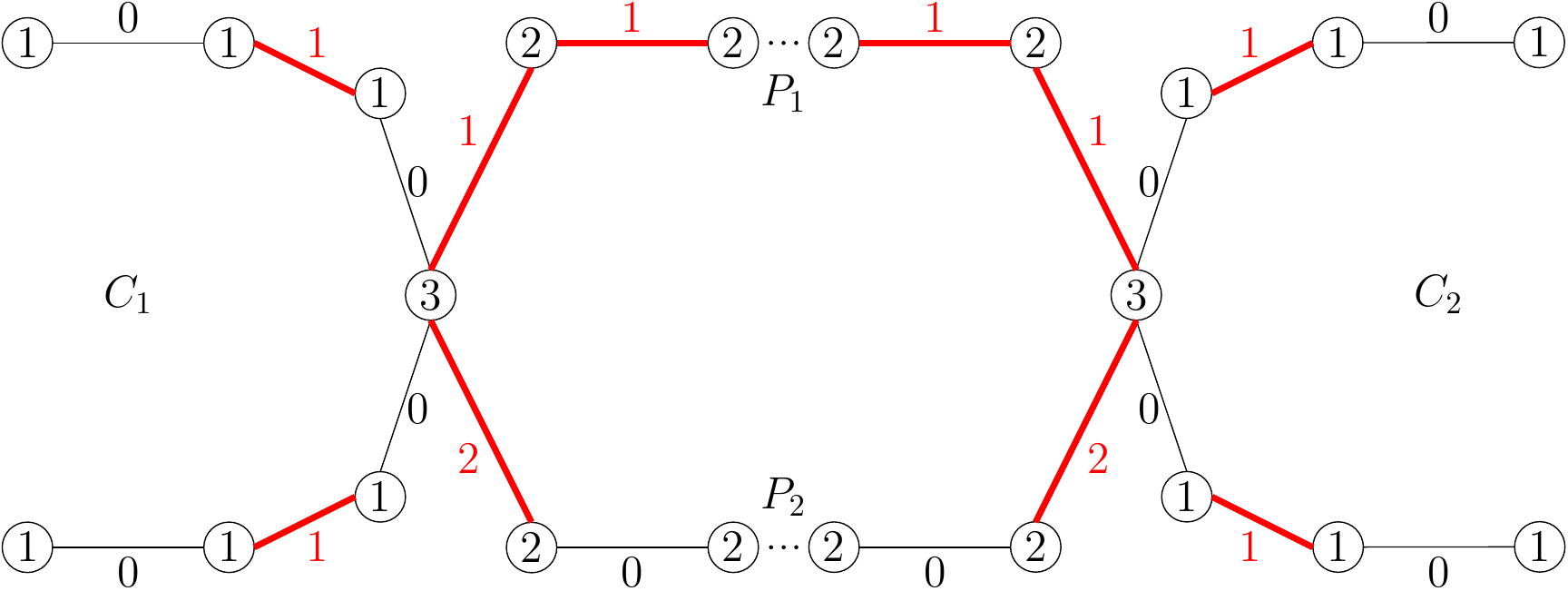}
\centering\caption{A facet of $\Delta_\s$: Case 1b when $c_{e_1} = 0$ and $q$ is odd.}\label{fig.1b0}
\end{figure}

\noindent{\it Case 1c:} $c_{f_1} = 0$. Observe if $c_{e_1} = 0$ then by tracing around $C_1$, we must have $c_{e_{2l+1}} = 0$. This implies that $c_{f_1'} = 3$, a contradiction. Thus, we have $c_{e_1} > 0$, which then implies that $c_{e_1} = c_{e_{2l+1}} = c_{f_1'} = 1$. By Remark \ref{rmk.coefficient}, we deduce that $c_{f_j} = 0$ if $j$ is odd and $c_{f_j} = 2$ if $j$ is even, and $c_{f_j'} = 1$ for all $1 \le j \le q$.

If $p$ is odd then, by a similar argument as above, we have the facet
\begin{align*}
G_{12} = & \{e_j ~|~ 1 \le j \text{ odd } \le 2l+1 \} \cup \{f_j ~|~ 1 \le j \text{ even } \le p\} \\
& \cup \{f_j' ~|~ 1\le j \le q\} \cup \{e_j' ~|~ 1 \le j \text{ odd } \leq 2l'+1\},
\end{align*}
and if $p$ is even then we get the facet
\begin{align*}
G_{12}' = & \{e_j ~|~ 1 \le j \text{ odd } \le 2l+1 \} \cup \{f_j ~|~ 1 \le j \text{ even } \le p\} \\
& \cup \{f_j' ~|~ 1\le j \le q\} \cup \{e_j' ~|~ 1 \le j \text{ even } \leq  2l'+1\}.
\end{align*}

\begin{figure}[h!]
\centering
\includegraphics[height=2in]{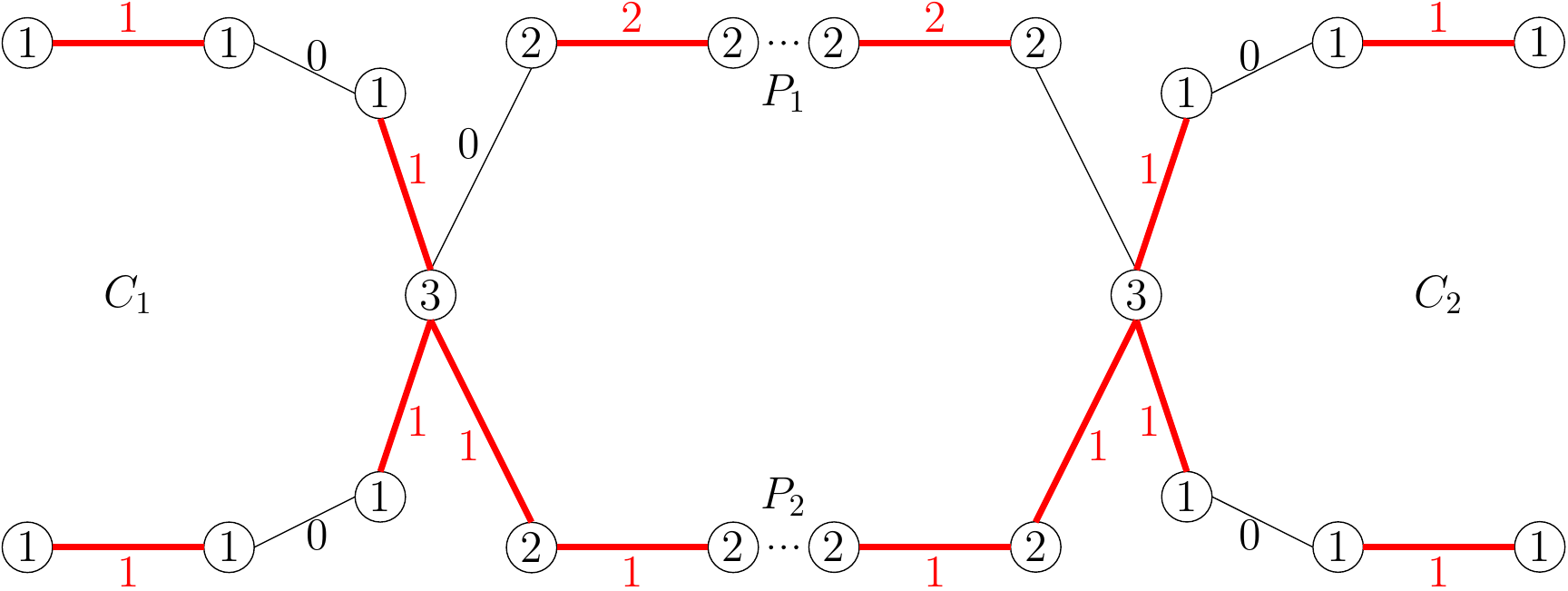}
\centering\caption{A facet of $\Delta_\s$: Case 1c when $p$ is odd.}\label{fig.1c}
\end{figure}

Hence, in Case 1, we conclude that $\Delta_\s$ has 4 facets $F_{11} \cup F_{12} \cup F_{21} \cup F_{22}$, where $F_{ij} = G_{ij}$ or $G_{ij}'$.

\noindent{\bf Case 2:} $P_1$ and $P_2$ share exactly one endpoint vertex. Without loss of generality, we may assume that $z_0 = w_0 = x_1$, $z_p = y_1$ and $z_q = y_v$, where $v \not= 1$. Noting that $c_{f_1}, c_{f_1'} \le 2$, our arguments proceed similarly to that in Case 1 by considering subcases depending on the value of $c_{f_1}$. In each subcase, we shall point out the similarities with Case 1, and leave the details to the interested reader.

\noindent{\it Case 2a:} $c_{f_1} = 2$. By similar arguments as that of Case 1a, we have $c_{f_j} = 2$ if $j$ is odd and $c_{f_j} = 0$ if $j$ is even, $c_{e_j} = 0$ for $j$ odd and $c_{e_j} = 1$ for $j$ even, and $c_{f_j'} = 1$ for all $j = 1, \dots, q$. Note also that since $c_{f_q'} = 1$ and $s_{y_v} = 2$, we have $c_{e_{v-1}'} + c_{e_v'} = 1$. Thus, again by considering of $p$ is odd or even (i.e., if $c_{f_p}$ is 2 or 0) and making use of Remark \ref{rmk.coefficient}, we deduce that either
$$c_{e_j'} = \left\{ \begin{array}{lcl} 1 & \text{if} & j \text{ is even} \\ 0 & \text{if} & j \text{ is odd} \end{array} \right. \text{ or }
c_{e_j'} = \left\{ \begin{array}{lcl} 0 & \text{if} & j \text{ is even} \\ 1 & \text{if} & j \text{ is odd.} \end{array} \right.$$
Therefore, either $p$ is odd or even, we always obtain a facet for $\Delta_\s$.
\begin{figure}[h!]
\centering
\includegraphics[height=2in]{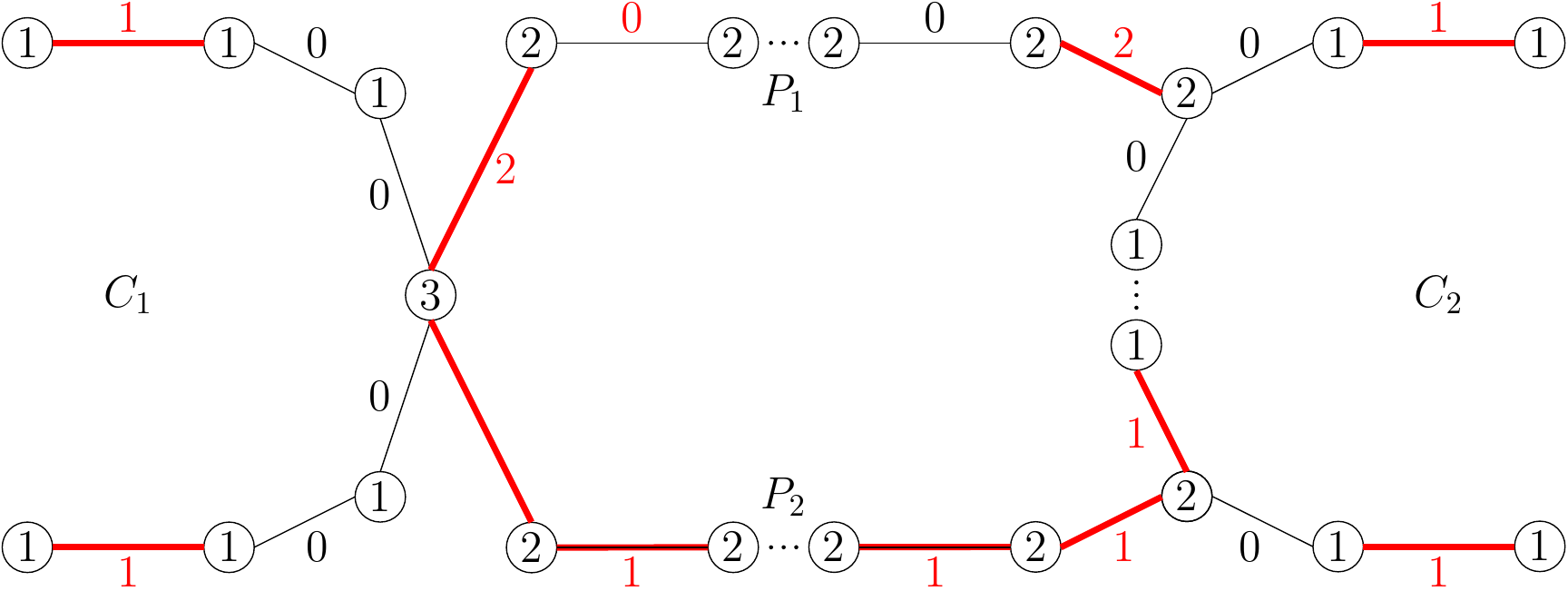}
\centering\caption{A facet of $\Delta_\s$: Case 2a when $p$ odd and $v$ is odd.}\label{fig.2a}
\end{figure}

\noindent{\it Case 2b:} By the same arguments as that of Case 1b, we have $c_{f_j} = 1$ for all $j = 1, \dots, p$. Suppose first that $c_{e_1} > 0$. Then, as before, we deduce that $c_{e_j} = 1$ if $j$ is odd and $c_{e_j} = 0$ if $j$ is even, and that $c_{f_j'} = 0$ if $j$ is odd and $c_{f_j'} = 2$ if $j$ is even. Now, by considering if $q$ is odd or even and considering the value $s_{y_1}=2$, we obtain a facet of $\Delta_\s$.

Suppose, on the other hand, that $c_{e_1} = 0$. The same arguments as in Case 1b then imply that $c_{e_j} = 0$ if $j$ is odd and $c_{e_j} = 1$ if $j$ is even, and $c_{f_j'} = 2$ if $j$ is odd and $c_{f_j'} = 0$ if $j$ is even. Once again, by considering if $q$ is odd or even and the value $s_{y_1}=2$, we always obtain another facet of $\Delta_\s$.

\noindent{\it Case 2c:} $c_{f_1} = 0$. As in Case 1c, we deduce that $c_{f_j} = 0$ if $j$ is odd and $c_{f_j} = 2$ if $j$ is even, and $c_{f_j'} = 1$ for all $1 \le j \le q$. To this end, by considering if $p$ is odd or even and the value $s_{y_v} = 2$, we again obtain a facet of $\Delta_\s$.

Hence, in Case 2, we obtain 4 facets for $\Delta_\s$ (one for each Case 2a and Case 2c, and two for Case 2b).

\noindent{\bf Case 3:} $P_1$ and $P_2$ do not share any endpoint vertices. Without loss of generality (and after a relabeling if necessary), we may assume that $z_0 = x_1$, $w_0 = x_u$, $z_p = y_1$, and $w_q = y_v$, where $u \not= 1$ and $v \not= 1$. Our arguments will proceed similarly to that in Cases 1 and 2 with some minor differences, which we shall identify in detail. As before, noting that $c_{f_1}, c_{f_1'} \le 2$, we shall consider subcases depending on the value of $c_{f_1}$.

\noindent{\it Case 3a:} $c_{f_1} = 2$. Since $s_{x_1} = 2$ in this case, we must have $c_{e_1} = 0$. It then follows from Remark \ref{rmk.coefficient} that $c_{e_j} = 0$ if $j$ is odd and $c_{e_j} = 1$ if $j$ is even. Particularly, at $x_u$, we always have $c_{e_{u-1}} + c_{e_u} = 1$. Since $s_{x_u} = 2$, this implies that $c_{f_1'} = 1$. Applying Remark \ref{rmk.coefficient} to the paths $P_1$ and $P_2$, we deduce that $c_{f_j} = 2$ if $j$ is odd, $c_{f_j} = 0$ if $j$ is even, and $c_{f_j'} = 1$ for all $1 \le j \le q$.

To this end, we again consider if $p$ is odd or even. If $p$ is odd then $c_{f_p} = 2$, which forces $c_{e_1'} = c_{e_{2l'+1}'} = 0$. Also, at $y_v$, since $c_{f_q'} =1$, we have $c_{e_{v-1}'} + c_{e_v'} = 1$. It follows, by Remark \ref{rmk.coefficient}, that $c_{e_j'} = 0$ if $j$ is odd and $c_{e_j'} = 1$ if $j$ is even. Thus, in this case we obtain a facet of $\Delta_\s$. On the other hand, if $p$ is even then $c_{f_p} = 0$, which forces $c_{e_1'} = c_{e_{2l'+1}'} = 1$. This, together with the hypothesis that $s_{y_v} = 2$ and Remark \ref{rmk.coefficient}, implies that $c_{e_j'} = 1$ if $j$ is odd and $c_{e_j'} = 0$ if $j$ is even. We again obtain a facet of $\Delta_\s$.
\begin{figure}[h!]
\centering
\includegraphics[height=2in]{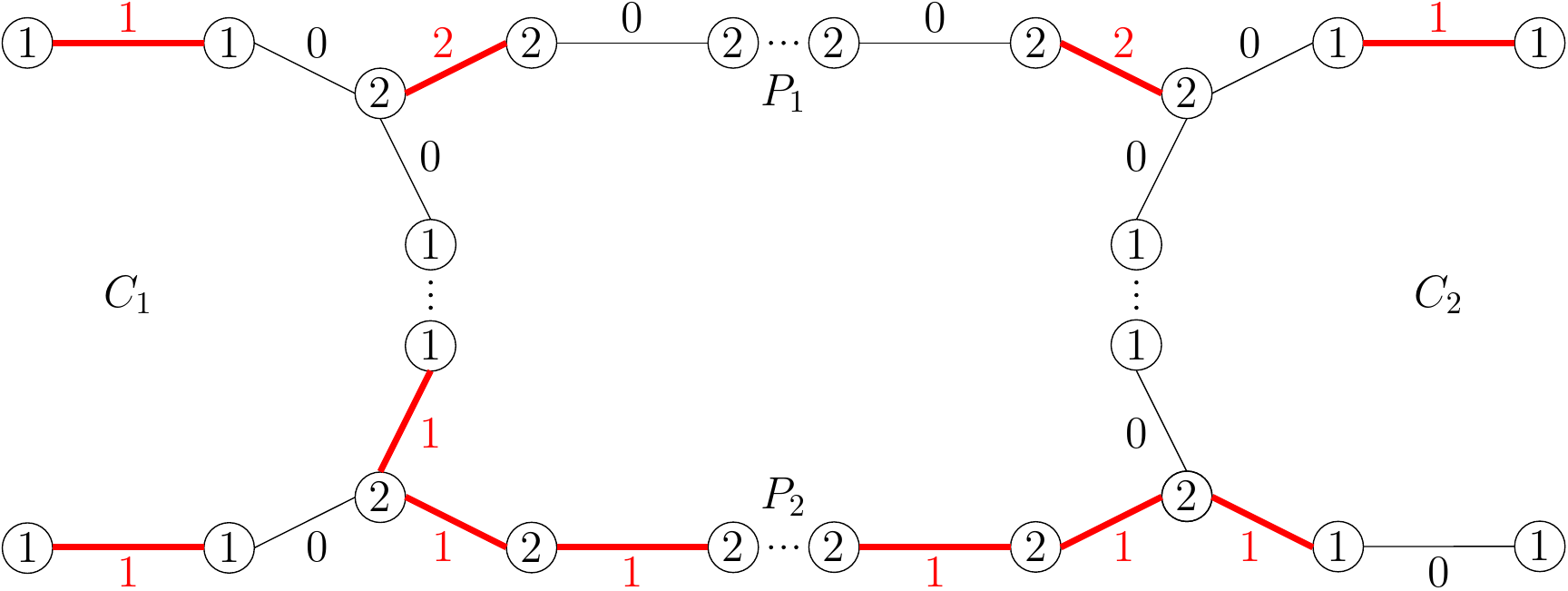}
\centering\caption{A facet of $\Delta_\s$: Case 3a when $p$ is odd, $u$ is odd and $v$ is even.}\label{fig.3a}
\end{figure}

\noindent{\it Case 3b:} $c_{f_1} = 1$. Since $s_{x_1} = 2$, we must have either $c_{e_1} = 1$ and $c_{e_{2l+1}} = 0$ or $c_{e_1} = 0$ and $c_{e_{2l+1}} = 1$. By making use of Remark \ref{rmk.coefficient} and tracing around the odd cycle $C_1$, it can be seen that the coefficients $c_{e_j}$'s alternate between 0 and 1, except at exactly one place, where the $c_{e_j}$'s remain 1 and 1 or 0 and 0. It follows from the hypothesis that this exception must be at $x_u$, where we have $s_{x_u} = 2$.

Suppose that $c_{e_{u-1}} = c_{e_u} = 0$. Then since $s_{x_u} = 2$, we must have $c_{f_1'} = 2$. Relabel the vertices on $C_1$ starting with $x_u$ (i.e., $x_u$ is now labeled by $x_1$), and interchange the role of $P_1$ and $P_2$. Then we are back to the same situation as in Case 3a, which again gives us a facet of $\Delta_\s$.

Suppose, on the other hand, that $c_{e_{u-1}} = c_{e_u} = 1$. Then we have $c_{f_1'} = 0$. By the same relabeling of the vertices and interchanging the role of $P_1$ and $P_2$, we bring to the situation of Case 3c, which we shall consider next. This gives us another facet of $\Delta_\s$ for Case 3b.

\noindent{\it Case 3c:} $c_{f_1} = 0$. By Remark \ref{rmk.coefficient}, we have $c_{f_j} = 0$ if $j$ is odd and $c_{f_j} = 2$ if $j$ is even. Since $s_{x_1} = 2$, it follows that $c_{e_1} = c_{e_{2l+1}} = 1$. By Remark \ref{rmk.coefficient} and tracing around $C_1$, we have $c_{e_j} = 1$ if $j$ is odd and $c_{e_j} = 0$ if $j$ is even. Particularly, at $x_u$, we have $c_{e_{u-1}}=1$ and  $c_{e_u} = 0$ or $c_{e_{u-1}}=0$ and  $c_{e_u} = 1$. This implies that $c_{f_1'} = 1$. It follows from Remark \ref{rmk.coefficient} again that $c_{f_j'} = 1$ for all $j = 1, \dots, q$.

To this end, we once again consider if $p$ is odd or even. Similarly to the arguments in Case 3a, we again obtain a facet for $\Delta_\s$.

Hence, in Case 3, we show that $\Delta_\s$ contains 4 facets (one for each Case 3a and Case 3c, and two for Case 3b).
\end{proof}


\section{Non-Cohen-Macaulay toric rings} \label{sec.nCM}

In this section, we shall use the forbidden structure described in the last section to give a sufficient condition for the toric ring $k[G]$ of a graph not to be Cohen-Macaulay.

Observe that a simplicial complex is determined by its facets, so we shall denote the simplicial complex with facets $F_1, \dots, F_t$ by $\langle F_1, \dots, F_t\rangle$. The next main result of our paper is stated as follows.

\begin{theorem} \label{thm.nCM}
Let $G = (V,E)$ be a simple graph. Suppose that $|E| \le |V|+2$ and $G$ contains an induced subgraph which consists of:
\begin{itemize}
\item two vertex-disjoint odd cycles; and
\item two vertex-disjoint (except possibly at their endpoint vertices) paths of length $\ge 2$ connecting these cycles.
\end{itemize}
Then the toric ring $k[G]$ is not Cohen-Macaulay.
\end{theorem}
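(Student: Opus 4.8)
The plan is to invoke Theorem \ref{thm.reduction} to reduce to the case in which $G$ is exactly the forbidden structure $H$ analyzed in Section \ref{sec.Delta}, and then to prove that $k[H]$ is not Cohen-Macaulay by producing a single nonzero graded Betti number $\beta_{i,\s}(k[H])$ with $i \ge 3$. By Theorem \ref{thm.homology} this amounts to finding a multidegree $\s$ for which $\tilde{H}_{i-1}(\Delta_\s;k) \ne 0$ for some $i \ge 3$.

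First I would set up the reduction. Let $H$ be the induced subgraph of $G$ consisting of the two odd cycles $C_1, C_2$ together with the connecting paths $P_1, P_2$. A direct count gives $|E_H| - |V_H| = 2$: each odd cycle contributes equally many vertices and edges, while a path of length $\ell$ contributes $\ell$ edges but only $\ell - 1$ interior vertices, for a net surplus of one edge per path, and this is insensitive to whether the paths share endpoint vertices (endpoints lie on the cycles and are already counted). Since $|E_G| \le |V_G| + 2$, the numerical hypothesis $|E_H| - |V_H| = 2 \ge |E_G| - |V_G|$ of Theorem \ref{thm.reduction}(2) holds, so it suffices to show $k[H]$ is not Cohen-Macaulay. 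As $H$ is connected and non-bipartite (it contains an odd cycle), \cite[Proposition 3.2]{Vill} gives $\dim k[H] = |V_H|$; writing $k[H] = k[E_H]/J$ over the polynomial ring $k[E_H]$ in $|E_H|$ variables, the Auslander--Buchsbaum formula shows that $k[H]$ is Cohen-Macaulay if and only if $\pd k[H] = |E_H| - |V_H| = 2$. Hence it is enough to exhibit a nonzero $\beta_{i,\s}(k[H])$ with $i \ge 3$.

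The homological heart of the argument is the following. Taking $\s$ as in Section \ref{sec.Delta}, Proposition \ref{lem.facets} shows $\Delta_\s$ has exactly four facets, which I label $A, B, C, D$; their explicit descriptions exhibit a uniform pattern, namely that $A, B$ traverse $P_2$ completely while alternating in opposite phases along $P_1, C_1, C_2$, whereas $C, D$ traverse $P_1$ completely while alternating in opposite phases along $P_2$. Since each facet is a simplex, every nonempty intersection of facets is again a simplex, hence contractible, so the Nerve Lemma identifies the homotopy type of $\Delta_\s$ with that of the nerve $N$ on the four vertices $\{A,B,C,D\}$. I would then check that every pair and every triple of facets meets (each triple shares at least one alternating path edge, which exists because the paths have length $\ge 2$), while $A \cap B \cap C \cap D = \emptyset$: indeed $A \cap B$ consists only of edges of $P_2$ and $C \cap D$ only of edges of $P_1$, and $P_1, P_2$ are edge-disjoint. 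Therefore $N = \partial\Delta^3 \simeq S^2$, so $\tilde{H}_2(\Delta_\s;k) \cong k \ne 0$, and by Theorem \ref{thm.homology} we get $\beta_{3,\s}(k[H]) = \dim_k \tilde{H}_2(\Delta_\s;k) \ne 0$. Thus $\pd k[H] \ge 3 > 2$, so $k[H]$ is not Cohen-Macaulay, and Theorem \ref{thm.reduction}(2) delivers the conclusion for $k[G]$.

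The main obstacle is the intersection bookkeeping in the previous step, carried out uniformly across the three endpoint-sharing cases of Proposition \ref{lem.facets} and through the parity subcases for the path lengths $p$ and $q$. The two structural facts that make the computation robust are that $A$ and $B$ agree only on the fully traversed path $P_2$ (disagreeing in phase on both cycles and on $P_1$), and symmetrically that $C$ and $D$ agree only on $P_1$ — this is what forces the quadruple intersection to be empty — together with the length-$\ge 2$ hypothesis, which guarantees a surviving alternating edge in each triple intersection. Confirming that these two features persist verbatim in Cases 2 and 3, where Proposition \ref{lem.facets} records the facets only in outline, is the part demanding the most care.
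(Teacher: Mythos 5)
Your proof is correct and follows the same overall strategy as the paper: reduce to the forbidden structure $H$ via Theorem \ref{thm.reduction}(2) (using $|E_H|-|V_H|=2$ and Auslander--Buchsbaum to convert non-Cohen-Macaulayness into $\pd k[H]\ge 3$), choose the same multidegree $\s$, and show $\beta_{3,\s}(k[H]) = \dim_k \tilde{H}_2(\Delta_\s;k) \ne 0$ using the four facets from Proposition \ref{lem.facets}. The one genuine difference is the final homology computation: the paper runs nested Mayer--Vietoris sequences on the decomposition $\Delta_\s = \langle F_{11},F_{12}\rangle \cup \langle F_{21},F_{22}\rangle$, reducing $\tilde{H}_2(\Delta_\s)$ to the $\tilde{H}_0$ of two disjoint simplices, whereas you cover $\Delta_\s$ by the four closed facets (all of whose nonempty intersections are full simplices, hence contractible) and apply the nerve theorem to identify $\Delta_\s$ with $\partial\Delta^3 \simeq S^2$. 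Both computations rest on exactly the same combinatorial facts --- the two facets containing all of $P_2$ meet only along $P_2$, dually for $P_1$, so the quadruple intersection is empty, while every triple intersection is nonempty because $p,q\ge 2$ --- so the case-by-case bookkeeping in Cases 2 and 3 is the same burden either way; the nerve argument is arguably cleaner in that it pins down the full homotopy type of $\Delta_\s$ at once, and it yields the same exact value $\dim_k\tilde{H}_2(\Delta_\s;k)=1$. Your treatment of the reduction step, including the count $|E_H|-|V_H|=2$ and its insensitivity to shared path endpoints, is sound and in fact more explicit than what the paper records.
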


\begin{proof} By Theorem \ref{thm.reduction}, we may assume that $G$ is the graph consisting of two vertex-disjoint odd cycles connected by exactly two vertex-disjoint (except possibly at their endpoints) paths of length $\ge 2$, and we may relabel the vertices and edges of $G$ as in Notation \ref{not.basic}.

It follows from \cite[Proposition 3.2]{Vill} that $\dim k[G] = |V|$. By the Auslander-Buchsbaum formula, we also have
$$\depth k[G] + \pd k[G] = \depth S = |E| = |V|+2.$$
To show that $k[G]$ is not Cohen-Macaulay, it suffices to show that $\pd k[G] \ge 3$. To achieve this, we shall pick a particular multidegree $\s$ and prove that $\beta_{3,\s}(k[G]) \not= 0$. Indeed, take $\s = (s_x ~|~ x \in V)$, where $s_x = 1 + \big|\{ j ~|~ x \in P_j\}\big|.$ By Theorem \ref{thm.homology}, we have
$$\beta_{3,\s} = \dim_k \tilde{H}_2(\Delta_\s;k).$$
Thus, it remains to show that $\dim_k \tilde{H}_2(\Delta_\s;k) \not= 0$.

Based on our arguments in Proposition \ref{lem.facets}, we shall consider the following cases:
\begin{itemize}
\item $P_1$ and $P_2$ share both endpoint vertices;
\item $P_1$ and $P_2$ share only one endpoint vertex; and
\item $P_1$ and $P_2$ share no endpoint vertices.
\end{itemize}
Our arguments for these cases are basically identical since in each case, as shown in Proposition \ref{lem.facets}, $\Delta_\s$ has exactly 4 facets which can be explicitly described. For this reason, we shall only present the detailed proof for the first case, where $P_1$ and $P_2$ share both endpoint vertices.

By the proof of Proposition \ref{lem.facets}, $\Delta_\s$ consists of 4 facets $F_{11}, F_{12}, F_{21}$ and $F_{22}$ as described. For the simplicity of arguments, we shall also assume that $p$ and $q$ are odd (the other situations follow exactly the same arguments). That is, $F_{11}, F_{12}, F_{21}$ and $F_{22}$ are as depicted in Figures \ref{fig.1a}, \ref{fig.1c}, \ref{fig.1b1}, and \ref{fig.1b0}. Particularly, we have
\begin{align*}
F_{11} = & \{e_j ~|~ 1 \le j \text{ odd } \le 2l+1 \} \cup \{f_j ~|~ 1 \le j \text{ odd } \le p\} \\
& \cup \{f_j' ~|~ 1\le j \le q\} \cup \{e_j' ~|~ 1 \le j \text{ even } 2l'+1\},\\
F_{12} = & \{e_j ~|~ 1 \le j \text{ even } \le 2l+1 \} \cup \{f_j ~|~ 1 \le j \text{ even } \le p\} \\
& \cup \{f_j' ~|~ 1\le j \le q\} \cup \{e_j' ~|~ 1 \le j \text{ odd } 2l'+1\},\\
F_{21} = & \{e_j ~|~ 1 \le j \text{ odd } \le 2l+1 \} \cup \{f_j ~|~ 1 \le j \le p\} \\
& \cup \{f_j' ~|~ 1\le j \text{ even }\le q\} \cup \{e_j' ~|~ 1 \le j \text{ odd } 2l'+1\},\\
F_{22} = & \{e_j ~|~ 1 \le j \text{ even } \le 2l+1 \} \cup \{f_j ~|~ 1 \le j \le p\} \\
& \cup \{f_j' ~|~ 1\le j \text{ odd }\le q\} \cup \{e_j' ~|~ 1 \le j \text{ even } 2l'+1\}.
\end{align*}

For $i = 1,2$, let $\Delta_i = \langle F_{i1}, F_{i2}\rangle$. Then $\Delta_\s = \Delta_1 \cup \Delta_2$. Consider the nested Mayer-Vietoris sequences as in Figure \ref{fig.MV}.
\begin{figure}[h!]
\[\xymatrix{
    & & & {\vdots}\\
    & & & {\begin{array}{c}\tH_0(\langle F_{1,1}\rangle \cap\Delta_2)\\
            \oplus\\
            \tH_0(\langle F_{1,2}\rangle \cap\Delta_2)\end{array}} \ar[u]\\
    & & &  {\tH_0(\langle F_{1,1}\rangle \cap \langle F_{1,2}\rangle\cap\Delta_2)} \ar[u]\\
    {\cdots} \ar[r] & {\begin{array}{c}\tH_2(\Delta_1)\\ \oplus\\ \tH_2(\Delta_2)\end{array}} \ar[r] &
            {\tH_2(\Delta_\s)}  \ar[r] &
            {\tH_1(\Delta_1\cap\Delta_2)} \ar[u]  \ar[r] &
            {\begin{array}{c}\tH_1(\Delta_1)\\ \oplus\\ \tH_1(\Delta_2)\end{array}} \ar[r] & \cdots \\
    & & & {\begin{array}{c}\tH_1(\langle F_{1,1}\rangle \cap\Delta_2)\\
            \oplus\\
            \tH_1(\langle F_{1,2}\rangle \cap\Delta_2)\end{array}} \ar[u]\\
    & & & {\vdots} \ar[u]
}\]
\centering\caption{Nested Mayer-Vietoris sequences.}\label{fig.MV}
\end{figure}
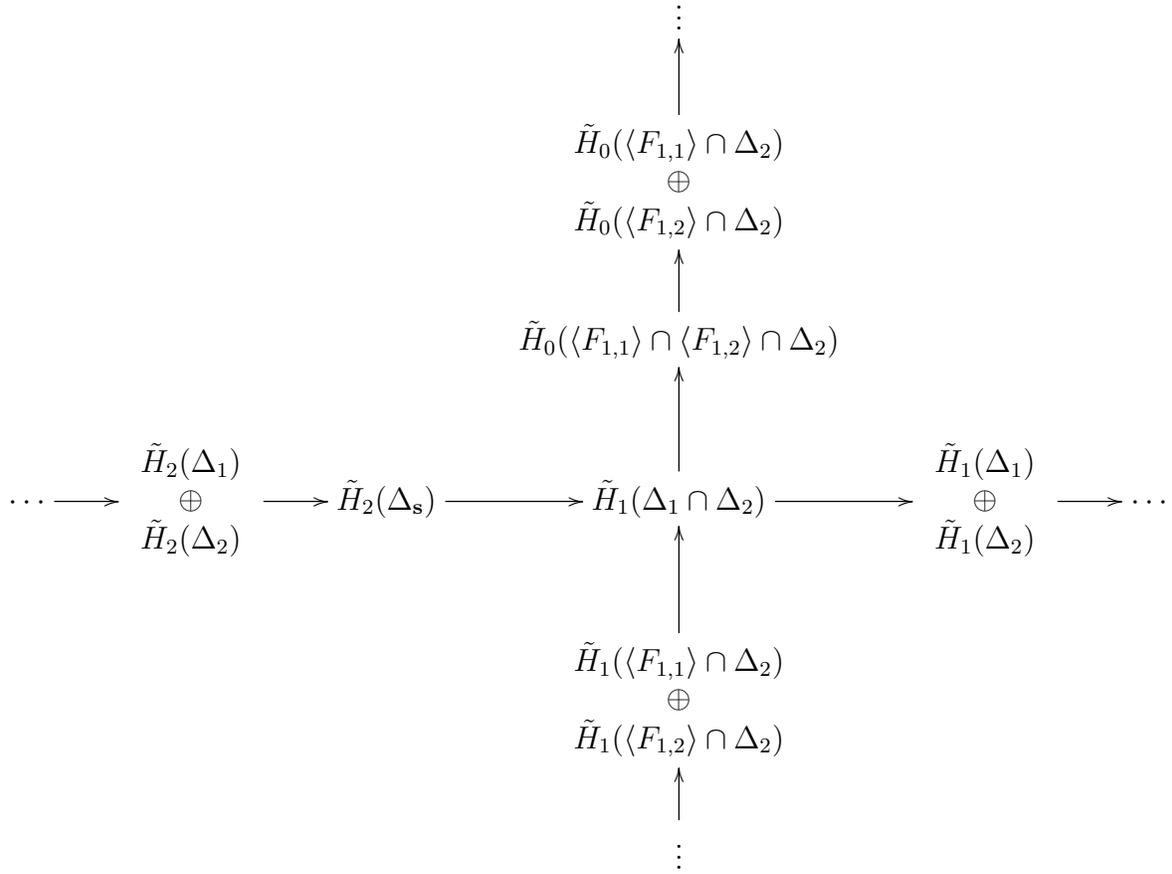

It can be seen that $F_{11} \cap F_{12} = \{f_j' ~|~ 1 \le j \le q\}$ and $F_{21} \cap F_{22} = \{f_j ~|~ 1 \le j \le p\}$. Thus, $\Delta_i$, for $i=1,2$, consists of two facets with nonempty intersection and, therefore, is contractible and has trivial reduced homology. That is, $\tH_l(\Delta_i)={0}$ for all $i=1,2$ and $l \ge 0$. It follows from the Mayer-Vietoris sequence that $\tH_2(\Delta_\s)\cong\tH_1(\Delta_1\cap\Delta_2)$.

Observe further that, for $j = 1,2$, $\langle F_{1j}\rangle \cap \Delta_2$ also consists of two facets with nonempty intersection and, thus, is contractible. Therefore, $\tH_l(\langle F_{1,j}\rangle \cap\Delta_2) = {0}$ for all $j=1,2$ and $l \ge 0$. Hence, we have the following isomorphisms
\[
\tH_2(\Delta_\s)\cong\tH_1(\Delta_1\cap\Delta_2)\cong\tH_0(\langle F_{1,1}\rangle \cap \langle F_{1,2}\rangle \cap \Delta_2).
\]

Observe finally that
\begin{align*}
\langle F_{1,1}\rangle \cap \langle F_{1,2}\rangle \cap \Delta_2 & = \langle\{f_j' ~|~ 1 \le j \le q\}\rangle \cap \langle F_{21} \cup F_{22}\rangle \\
& = \langle \{f_j' ~|~ 1 \le j \text{ even } \le q\}\rangle \cup \langle \{f_j' ~|~ 1 \le j \text{ odd } \le q\}\rangle.
\end{align*}
is the simplicial complex with exactly two disjoint facets. It follows that
$$\dim_k\tH_0(\langle F_{1,1}\rangle \cap \langle F_{1,2}\rangle \cap\Delta_2)=1,$$
which completes the proof of the theorem.
\end{proof}

When the two paths connecting the induced odd cycles $P_1$ and $P_2$ are allowed to share internal vertices, we no longer necessarily have a forbidden structure. This is illustrated in the following examples.

\begin{figure}[h!]
\centering
\includegraphics[height=2in]{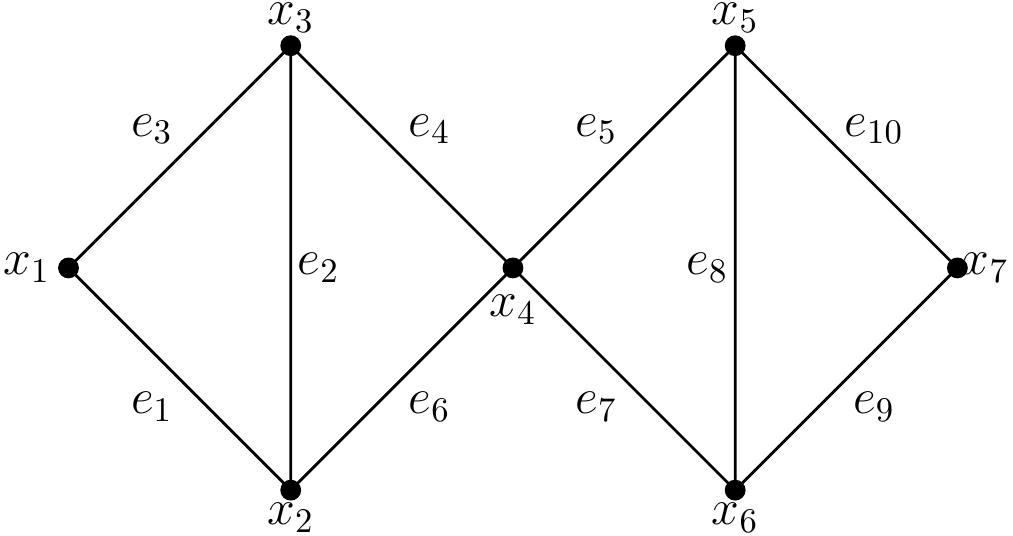}
\centering\caption{When paths $P_1$ and $P_2$ have common internal vertices and $k[G]$ is Cohen-Macaulay.}\label{fig.CMgraph}
\end{figure}
\begin{example} \label{ex.1}
Let $G$ be the graph in Figure \ref{fig.CMgraph}. By a direct computation, we have $\depth k[G] = \dim k[G] = 7$. Thus, $k[G]$ is Cohen-Macaulay.
\end{example}

\begin{figure}[h!]
\centering
\includegraphics[height=2in]{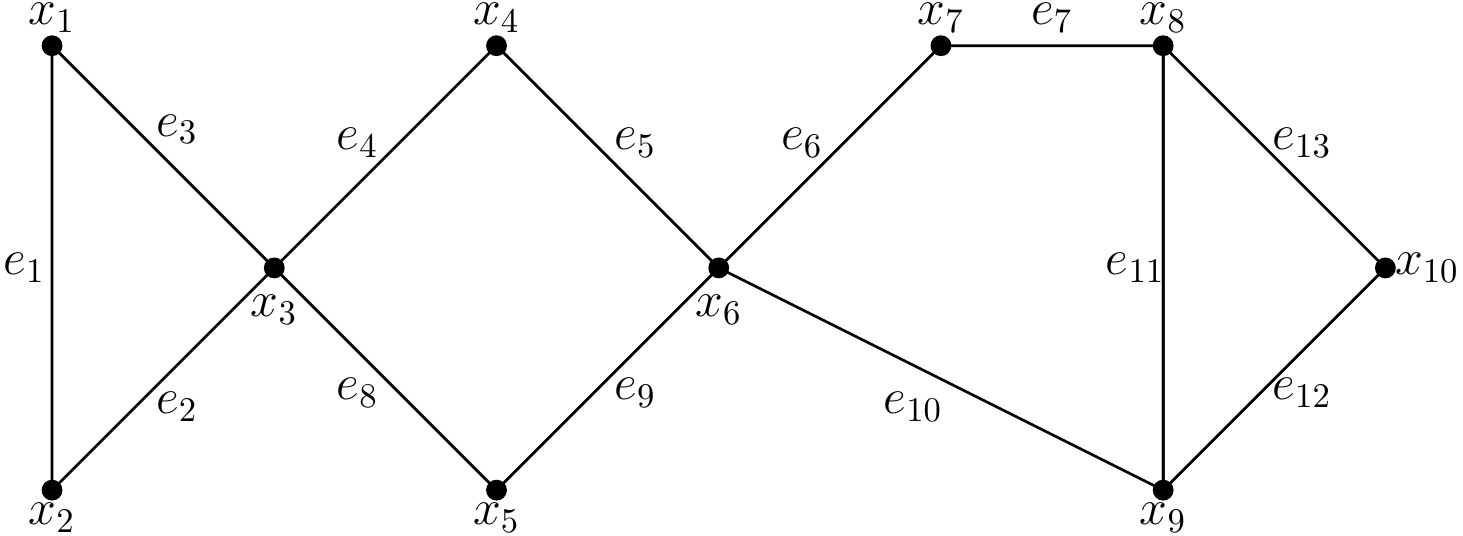}
\centering\caption{When paths $P_1$ and $P_2$ have common internal vertices and $k[G]$ is not Cohen-Macaulay.}\label{fig.nCMgraph}
\end{figure}
\begin{example} \label{ex.2}
Let $G$ be the graph in Figure \ref{fig.nCMgraph}. By a direct computation, we have $\depth k[G] = 8 < \dim k[G] = 9$. Particularly, $k[G]$ is not Cohen-Macaulay.
\end{example}

Theorem \ref{thm.nCM} is no longer true without the condition that $|E| \le |V|+2$. We thank K. Kimura for showing us the following example.

\begin{figure}[h!]
\centering
\includegraphics[height=2in]{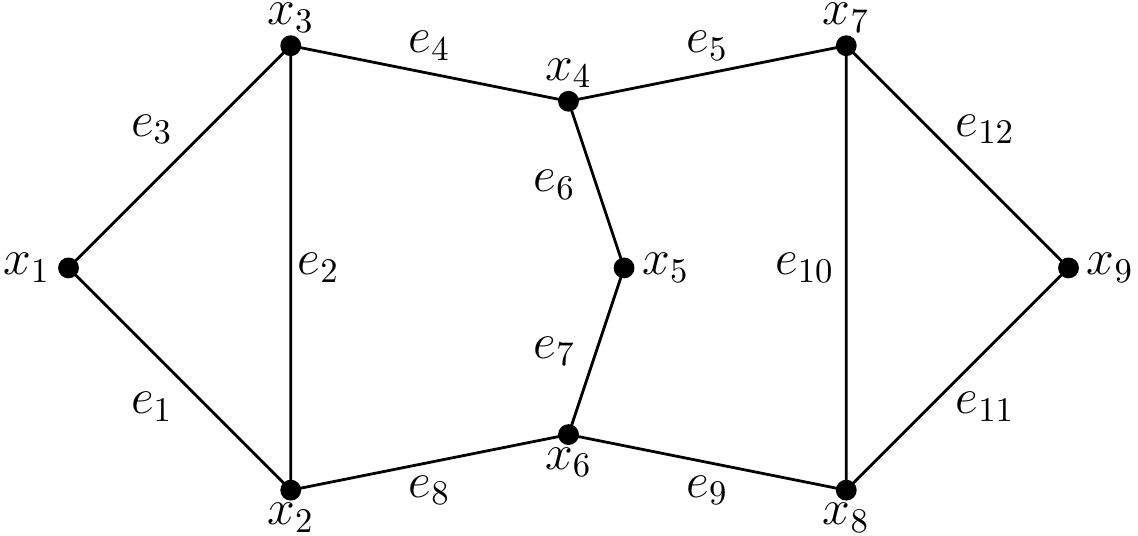}
\centering\caption{Graph with Cohen-Macaulay toric ring and an induced subgraph with non-Cohen-Macaulay toric ring.}\label{fig.Kimura}
\end{figure}
\begin{example} \label{ex.Kimura}
Let $G$ be the graph in Figure \ref{fig.Kimura} and let $H$ be the induced subgraph of $G$ which excludes the vertex $x_9$. Then $|E_G| = |V_G| + 3.$ It can be seen that $H$ is a forbidden structure, and $k[H]$ is not Cohen-Macaulay with $\depth k[H] = 7 < \dim k[H] = 8$. However, $k[G]$ is Cohen-Macaulay with $\depth k[G] = \dim k[G] = 9.$
\end{example}

Our computation in the proof of Theorem \ref{thm.nCM} further gives us the following bound for the regularity of $k[G]$ in terms of the size of its forbidden structure.

\begin{corollary} \label{cor.regforbidden}
Let $G = (V,E)$ be a simple graph that contains an induced subgraph $H$ which consists of:
\begin{itemize}
\item two vertex-disjoint odd cycles; and
\item two vertex-disjoint (except possibly at their endpoint vertices) paths of length $\ge 2$ connecting these cycles.
\end{itemize}
Suppose that $|V_H| = t$ and the lengths of the two paths connecting the two odd cycles in $H$ are $p$ and $q$. Then $\reg k[G] \ge t+p+q - 1$.
\end{corollary}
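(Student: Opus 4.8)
The plan is to read the bound off the nonvanishing Betti number already produced in the proof of Theorem~\ref{thm.nCM}, applied now to the forbidden structure $H$ itself. First I would invoke Theorem~\ref{thm.reduction}(1): as $H$ is an induced subgraph of $G$, one has $\reg k[G]\ge\reg k[H]$, so it suffices to prove $\reg k[H]\ge t+p+q-1$. I would then relabel the vertices and edges of $H$ as in Notation~\ref{not.basic} and choose the same multidegree $\s=(s_x\mid x\in V_H)$ with $s_x=1+\bigl|\{j\mid x\in P_j\}\bigr|$.

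Running the argument of Theorem~\ref{thm.nCM} verbatim on $H$, Proposition~\ref{lem.facets} shows that $\Delta_\s$ has exactly four explicit facets, and the nested Mayer--Vietoris computation gives $\tH_2(\Delta_\s;k)\cong\tH_0(\langle F_{1,1}\rangle\cap\langle F_{1,2}\rangle\cap\Delta_2;k)$, which is one-dimensional. By Theorem~\ref{thm.homology} this yields a nonzero graded Betti number $\beta_{3,\s}(k[H])=\dim_k\tH_2(\Delta_\s;k)=1$, sitting in homological degree $3$ and internal degree $\s$. Such a nonvanishing forces $\reg k[H]\ge\bigl(\sum_{x\in V_H}s_x\bigr)-3$.

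It then remains to compute the degree of $\s$, and the pleasant feature is that this is uniform over the three endpoint-configurations. Indeed, $\sum_{x\in V_H}s_x=\sum_{x\in V_H}1+\sum_{x\in V_H}\bigl|\{j\mid x\in P_j\}\bigr|=|V_H|+|V_{P_1}|+|V_{P_2}|$, and since the path $P_1$ of length $p$ has $p+1$ vertices and $P_2$ of length $q$ has $q+1$ vertices (endpoints included), this equals $t+(p+1)+(q+1)=t+p+q+2$, regardless of whether $P_1$ and $P_2$ share zero, one, or two endpoints. Hence $\reg k[H]\ge(t+p+q+2)-3=t+p+q-1$, which together with the first step proves the corollary. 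The only thing requiring care is this last bookkeeping together with correctly reading the internal degree off $\s$; the genuine work --- establishing $\beta_{3,\s}\neq0$ --- has already been carried out in Theorem~\ref{thm.nCM}, and the case analysis there does not affect the degree count, since the three configurations differ only in how path endpoints are identified with cycle vertices and never alter $|V_{P_1}|+|V_{P_2}|$.
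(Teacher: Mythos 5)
Your proposal is correct and follows essentially the same route as the paper: both rest on the nonvanishing of $\beta_{3,\s}$ established in the proof of Theorem~\ref{thm.nCM} together with the degree count $|\s| = t+p+q+2$. The only cosmetic difference is that you pass through $\reg k[H]$ via Theorem~\ref{thm.reduction}(1), while the paper transfers the Betti number itself via Corollary~\ref{cor.reduction} (writing $\beta_{3,\s}(k[G]) \ge \beta_{3,\s}(k[H]) > 0$) and then bounds $\reg k[G]$ directly; these are the same underlying argument.
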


\begin{proof} It is easy to see that with the choice $\s = (s_x ~|~ x \in V_H)$, where
$$s_x = 1 + \big|\{\text{paths connecting odd cycles in $H$ that pass through $x$}\}\big|,$$
as given in the proof of Theorem \ref{thm.nCM}, we have
$|\s| = |V_H| + p + q + 2 = t+p+q+2.$
By Corollary \ref{cor.reduction} and the proof of Theorem \ref{thm.nCM}, we also have that
$$\beta_{3,\s}(k[G]) \ge \beta_{3,\s}(k[H]) > 0.$$
Thus, $\reg k[G] \ge |\s| - 3 = t+p+q-1.$
\end{proof}


\end{document}